\newcommand{\R}{\mathbb{R}}
\newcommand{\C}{\mathbb{C}}
\theoremstyle{plain}
\newtheorem{theorem}{Theorem}[section]
\newtheorem*{theorem*}{Theorem}
\newtheorem{proposition}[theorem]{Proposition}
\newtheorem{corollary}[theorem]{Corollary}
\newtheorem{lemma}[theorem]{Lemma}
\theoremstyle{definition} 
\newtheorem{definition}[theorem]{Definition} 
\newtheorem{remark}[theorem]{Remark} 
\numberwithin{equation}{section}
\title[\texorpdfstring{Analytic Extensions of $A_{\infty}$-weights}{}]{Analytic extensions of $A_{\infty}$-weights on Lipschitz curves and their use in weighted Hardy spaces}
\author[F. Ballesta-Yag\"ue]{Fernando Ballesta-Yag\"ue}
\address[F. Ballesta-Yag\"ue]{Departamento de An\'alisis Matem\'atico y Matem\'atica Aplicada, Facultad de Ciencias Matem\'aticas, Universidad Complutense de Madrid \hfill\break\indent Pl. de las Ciencias 3, 28040 Madrid, Spain}
\email{ferballe@ucm.es}
\begin{document}

\begin{abstract}

    An $A_{\infty}$-weight on a Lipschitz curve $\Lambda$ in the plane can be extended analytically to the graph Lipschitz domain $\Omega$ above it. This problem was studied by C. Kenig~\cite{kenigWeighted}, who introduced the class $AE$ of well-behaved analytic extensions. Later, he and D. Jerison~\cite{jerisonKenig} added a Smirnov-type condition to the definition of this class.  

    In this note, we show that this Smirnov-type condition is equivalent to an $H^1$-integrability condition. As a consequence, one of the conditions in the definition of $AE$ can be dropped.  We use this simplification to apply C. Kenig's theory to prove results about weighted Hardy spaces. These are useful to study the Neumann problem in $\Omega$ with boundary data in weighted spaces.

\end{abstract}

\keywords{Class AE, analytic extension, graph Lipschitz domain, Poisson integral, Muckenhoupt weights, weighted Hardy spaces}
\thanks{
The author was supported by grants PID2020-113048GB-I00 funded by MCIN/AEI/10.13039/501100011033,  CEX2019-000904-S funded by MCIN/AEI/ 10.13039/501100011033 and Grupo UCM-970966 (Spain), and benefited from an FPU Grant FPU21/06111 from Ministerio de Universidades (Spain).
}

\subjclass[2020]{Primary: 42B30; secondary: 30H10, 	42B35, 	42B37, 	42B99.
}

\maketitle

\section{Introduction}

Let $\Omega$ be a graph Lipschitz domain and let $\nu$ be a measure over $\Lambda$, the boundary of $\Omega$. The class $AE(\Omega,\nu)$, where $AE$ stands for \textit{analytic extensions}, is a beautiful construction that allows us to deal with the following problem. In weighted $L^p$ spaces, the weight can be moved from the measure to the function easily in the following sense: if $v$ and $w$ are weights, then
\[
\|fw\|_{L^p(v)}
=
\|f\|_{L^p(vw^{p})}.
\]
However, the same cannot be done with such ease for Hardy spaces, since the definition of the $H^p(\Omega,d\nu)$-norm 
\[
\|F\|_{H^p(\Omega,d\nu)}
\simeq_{\alpha}
\|\mathcal{M}_{\alpha}(F)\|_{L^p(\Lambda,d\nu)}
\]
in terms of the non-tangential maximal operator $\mathcal{M}_{\alpha}$ involves a supremum. Therefore, given $\nu$ and $\mu$ two measures on $\Lambda$ absolutely continuous with respect to arc-length measure $ds$, and whose density functions are denoted
by $\nu$ and $\mu$ as well, it is not clear if the following chain of equivalences holds:
\begin{equation}\label{eq:desiredChangeMeasureHpNorm}
\|F\|_{H^p(\Omega,\nu^{-p} \mu ds)}
\simeq_{\alpha}
\left\|
\frac{\mathcal{M}_{\alpha}(F)}{\nu}
\right\|_{L^p(\Lambda,d\mu)}
\overset{(\text{?})}{\simeq}
\left\|
\mathcal{M}_{\alpha}\left(\frac{F}{\nu}\right)
\right\|_{L^p(\Lambda,d\mu)}
\simeq_{\alpha}
\left\|
\frac{F}{\nu}
\right\|_{H^p(\Omega,d\mu)}.
\end{equation}
In order for the above to even make sense, we would need $\frac{F}{\nu}$ to be an analytic function on $\Omega$. But $\nu$ is a function defined on $\Lambda$. Therefore, to begin with, we would need to obtain some kind of \textit{analytic extension} of the weight $\nu$, namely $G$, from the boundary $\Lambda=\partial\Omega$ to the whole graph Lipschitz domain $\Omega$, such that the modulus of the non-tangential limit of $G$ is $\nu$. 

As far as we know, the class $AE$ was introduced in Kenig's PhD thesis \cite{kenigSymposiaWeighted} and \cite{kenigWeighted}. He proved that, when the measures are in $A_{\infty}(\Lambda)$, this kind of analytic extensions exist, and are well-behaved in some sense that we will explore below. Later, Jerison and Kenig himself modified the definition in \cite{jerisonKenig}, adding a Smirnov-type property (Definition \ref{def:smirnov}).

To the best of our knowledge, this class has not been used until its recent appearance in \cite{CNO}, where Carro, Naibo and Ortiz-Caraballo use it to manipulate the norms of certain maximal operators that arise when solving the Neumann problem in graph Lipschitz domains in the plane with arc-length measure on the boundary. The motivation behind this note has been to understand better the class $AE$ in order to deal in \cite{Ainfty_BYC} with the Neumann problem with boundary measures more general than arc-length measure and extend some results in \cite{CNO}.

The structure of the article is the following. First, we reformulate
some ideas from \cite{HMW}. Then, we motivate the definition of the class $AE$ with a basic example. In view of its behaviour, we state and prove a general equivalence between an $H^1$-condition and a Smirnov-type condition in Theorem \ref{thm:generalizacionArgumentoH1}. Based on this, we re-define the class $AE$ and explain how, with the Smirnov-type property added in \cite{jerisonKenig}, the $H^1$-integrability property in the original definition of \cite{kenigWeighted} is no longer necessary (see Remarks \ref{remark:oneWeightUnnecessary} and \ref{remark:twoWeightUnnecessary} below). Finally, in Corollary \ref{corol:analogoLema4.4CNO} we generalize Lemma 4.4 of \cite{CNO} as an easy consequence of the theory developed in the previous sections. This generalization will be used in \cite{Ainfty_BYC} to deal with the $L^p$ and $H_{at}^1$-Neumann problem with $A_{\infty}$-measures on the boundary.

We would like to emphasize that certain ideas already appear in \cite{HMW}, \cite{kenigWeighted} and \cite{jerisonKenig}. However, we use them in a way that allows us to simplify the theory and obtain specific applications of it in a direct manner.

\section{Notation and preliminaries}\label{sec:section1.1_notation}

Let $\Lambda$ be a curve in the complex plane given parametrically by $\eta(x)=x+i\gamma(x)$ for $x \in \mathbb{R}$, where $\gamma$ is a real-valued Lipschitz function, and consider the graph Lipschitz domain above $\Lambda$,
\begin{equation}\label{eq:omega}
\Omega
=
\left\{
z_1+i z_2 \in \mathbb{C}: z_2>\gamma(z_1)
\right\}.    
\end{equation}
Note that $\partial \Omega=\Lambda$. Let $L$ denote the Lipschitz constant of $\gamma$.

We denote by $ds$ arc-length measure on $\Lambda$, defined as
\[
ds(E)
\coloneqq 
\int_{\eta^{-1}(E)}|\eta'(t)|\, dt
=
\int_{\eta^{-1}(E)}\sqrt{1+|\gamma'(t)|^2}\, dt,\quad E\subset\Lambda.
\]

\begin{definition}
Given $F$ a complex-valued function defined in $\Omega$ and $\alpha\in (0,\arctan{\frac{1}{L}})$, define the \textit{non-tangential maximal operator} $\mathcal{M}_\alpha$ of $F$ as
\begin{equation}\label{eq:definitionMalpha}
\mathcal{M}_\alpha(F)(\xi)=\sup _{z \in \Gamma_\alpha(\xi)}|F(z)|, \quad \xi \in \Lambda,    
\end{equation}
where
\[
\Gamma_\alpha(\xi)
=
\left\{
z_1+i z_2 \in \mathbb{C}: z_2>\operatorname{Im}(\xi)
\text { and }
|\operatorname{Re}(\xi)-z_1|<\tan (\alpha)|z_2-\operatorname{Im}(\xi)|
\right\}.
\]
\end{definition}

\begin{definition}
    Given a complex-valued function $F$ defined on $\Omega$, we say that $F(z)$ converges non-tangentially when $z\to \xi\in\partial\Omega$ if the following limit exists
    \[
    \lim_{z\triangleright \xi}F(z)
    \coloneqq 
    \lim_{\substack{z\to \xi\\ z\in\Gamma_{\alpha}(\xi)}}F(z),
    \]
    for some $\alpha\in (0,\arctan{\frac{1}{L}})$. In that case, we call this limit the non-tangential limit of $F$ when $z\to\xi$.
\end{definition}

\subsection{Conformal mapping}

Since $\Omega$ is simply connected, it is conformally equivalent to the upper half plane
\[
\R_{+}^{2}
\coloneqq 
\{
x+iy\in\C: x\in\R, y>0
\}.
\]
Let $z_0=i x_0$ with $x_0>\gamma(0)$. Let $\Phi\colon \R_{+}^{2} \rightarrow \Omega$ be the conformal mapping such that $\Phi(\infty)=\infty$, and $\Phi(i)=z_0$. Let $\Phi^{-1}: \Omega \rightarrow \R_{+}^{2}$ be its inverse. We have the following boundary behaviour.

\begin{theorem}[Thm. 1.1 in \cite{kenigWeighted}]\label{thm:thm1.1_Kenig}
$\Phi$ extends to $\overline{\R_{+}^{2}}$ as a homeomorphism onto $\overline{\Omega}$. Besides, $\Phi'$ has a non-tangential limit $dx$-a.e. on $\R$, and this limit is different from 0 $dx$-a.e. on $\R$. Moreover, $\Phi(x)$, with $x\in\R=\partial\R_{+}^{2}$, is absolutely continuous when restricted to any finite interval, and hence $\Phi'(x)$ exists $dx$-a.e. and is locally integrable. Moreover, this derivative coincides with the non-tangential limit of the derivative of the conformal map $\Phi$, i.e.,  $\Phi'(x)=\lim_{\R_{+}^{2}\ni z\triangleright x} \Phi'(z)$ $dx$-a.e.
\end{theorem}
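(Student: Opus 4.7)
The statement breaks into three essentially independent parts: (i) $\Phi$ extends to a homeomorphism of the closures in $\widehat{\C}$; (ii) $\Phi'$ has non-tangential limits a.e.\ on $\R$, with limits different from $0$ a.e.; (iii) the boundary parametrization $x\mapsto \Phi(x)$ is locally absolutely continuous, and its classical derivative agrees a.e.\ with the non-tangential limit of the analytic derivative. The plan is to treat these in order, with the main bridge being a localization that reduces the unbounded graph Lipschitz setting to the classical case of bounded Jordan domains with rectifiable boundary.

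For (i), I would invoke Carath\'eodory's extension theorem. The boundary $\Lambda$, together with the point $\infty$, is a Jordan curve in the Riemann sphere (the parametrization $\eta(x)=x+i\gamma(x)$ is a homeomorphism of $\R$ onto $\Lambda$, which extends by sending $\pm\infty\mapsto\infty$), so $\Omega\cup\{\infty\}$ is a Jordan domain. Carath\'eodory then yields the homeomorphic extension $\overline{\R^2_+}\to\overline{\Omega}$, and the normalizations $\Phi(i)=z_0$, $\Phi(\infty)=\infty$ single out the conformal map up to a real translation, absorbed by the construction.

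For (ii) and (iii), I would transport the problem to the disk via the Cayley transform $\psi\colon\mathbb{D}\to\R^2_+$ and study $\widetilde{\Phi}=\Phi\circ\psi\colon\mathbb{D}\to\Omega$, whose boundary map carries $\partial\mathbb{D}\setminus\{1\}$ homeomorphically onto $\Lambda$. Because $\Omega$ is unbounded, the global F.~and M.~Riesz theorem does not directly apply; instead I would exhaust $\Omega$ by bounded Jordan subdomains $\Omega_n$ obtained by truncating above a large ball, whose boundaries are rectifiable (the portion of $\Lambda$ has length bounded by $\sqrt{1+L^2}$ times the base interval, and the closing arc is rectifiable). Applying the classical F.~Riesz theorem to each conformal map onto $\Omega_n$, and using Koebe distortion together with standard normal families arguments to compare with $\widetilde{\Phi}$, gives $\widetilde{\Phi}'\in H^1_{\mathrm{loc}}(\partial\mathbb{D}\setminus\{1\})$. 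This yields both the a.e.\ non-tangential limits of $\widetilde{\Phi}'$ and the local absolute continuity of the boundary map on any sub-arc staying away from $1$. Translating back to $\R^2_+$ gives, on each compact $I\subset\R$, the a.e.\ existence of the non-tangential limit of $\Phi'$, local $L^1$-integrability, and absolute continuity of $\Phi|_I$. The identification $\Phi'(x)=\lim_{z\triangleright x}\Phi'(z)$ a.e.\ is then a Fatou-type result: for a.e.\ $x$, the difference quotient $(\Phi(x+h)-\Phi(x))/h$ coincides with the non-tangential limit, via dominated convergence applied to the $H^1_{\mathrm{loc}}$ derivative.

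For the non-vanishing a.e., I would use that $\Phi'\neq 0$ on $\R^2_+$ (as $\Phi$ is conformal), so $\log\Phi'$ is a single-valued analytic function on $\R^2_+$ of Nevanlinna class (locally). If the non-tangential limit of $\Phi'$ vanished on a set $E\subset\R$ of positive measure, then the boundary values of $\log|\Phi'|$ would equal $-\infty$ on $E$, forcing (by the Jensen-type inequality controlling boundary values of $\log|f|$ for $f$ in the Nevanlinna class) $\Phi'\equiv 0$, a contradiction. The main obstacle I anticipate is making the truncation/comparison rigorous in step (ii): one must verify that localization preserves the $H^1$-type behavior of the derivative on each compact sub-arc of $\partial\mathbb{D}\setminus\{1\}$ despite the non-compactness of $\Omega$, and this is where the Lipschitz hypothesis on $\gamma$ (giving uniform local rectifiability) is essential.
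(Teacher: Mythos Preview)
The paper does not give its own proof of this theorem; it is stated as a quotation of Theorem~1.1 in Kenig's \emph{Weighted $H^p$ spaces on Lipschitz domains}, with no argument supplied. There is therefore nothing in the paper to compare your proposal against.

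As to the proposal itself: your division into (i)--(iii) and the appeal to Carath\'eodory for (i) are standard and correct. For (ii)--(iii), the overall strategy of reducing to the classical F.~and M.~Riesz theorem via rectifiable truncations is sound in spirit, but the comparison step you flag as the main obstacle is genuinely incomplete as written. Passing from conformal maps $\Phi_n$ onto the truncated domains $\Omega_n$ back to $\widetilde{\Phi}$ via ``Koebe distortion together with standard normal families arguments'' does not by itself transfer local $H^1$ control of $\Phi_n'$ to $\widetilde{\Phi}'$; one needs a uniform bound on $\int_{J}|\widetilde{\Phi}'(r e^{i\theta})|\,d\theta$ for sub-arcs $J$ away from $1$, and the normal-families convergence of $\Phi_n'\to\widetilde{\Phi}'$ is only locally uniform in the interior, not up to the boundary. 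A cleaner route (and closer to how Kenig actually proceeds) is to first establish the purely interior estimate $|\arg\Phi'(z)|\le\arctan L<\pi/2$ on $\R_+^2$, which follows from the geometry of the graph domain and does not use any boundary theory. This forces $\operatorname{Re}\Phi'>0$, so $\Phi'$ lies in the class of holomorphic functions with positive real part; such functions automatically have non-tangential boundary values a.e., these values are non-zero a.e., and the local $H^1$ behaviour of $\Phi'$ (hence local absolute continuity of $\Phi$ on $\R$) follows. This bypasses the truncation entirely.
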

We abuse notation and denote by $\Phi$ and $\Phi'$ the mappings in $\R_{+}^{2}$, the extensions to $\overline{\R_{+}^{2}}$ and the restrictions to $\R\equiv \partial\R_{+}^{2}$.

\subsection{Weights }

Muckenhoupt's weights (\cite{muckenhoupt}) can be defined on $\Lambda$ as is done for $\R$, just replacing Lebesgue measure $dx$ by arc-length measure $ds$, and intervals of $\R$ by the following notion of interval on $\Lambda$. See pages 133-134 of \cite{kenigWeighted} for more details.

\begin{definition}\label{def:intervalLambda}
    We define an \textit{interval} or arc in $\Lambda$ as the image of an interval $I\subset \R$ by the mapping $\eta$. That is, $J\subset\Lambda$ is an interval in $\Lambda$ if $\eta^{-1}(J)$ is an interval in $\R$. 
\end{definition}

Let $\nu$ be a measure over $\Lambda$ absolutely continuous with respect to arc-length $ds$. Define the following measure on $\R$ associated to $\nu$ via $\Phi$,
\[
\Phi(\nu)(E)
\coloneqq 
\nu(\Phi(E)),\quad E\subset \R.
\]
It is absolutely continuous with respect to $dx$ and its density function is 
$\frac{d\Phi(\nu)}{dx}(x)=\frac{d\nu}{ds}(\Phi(x)) |\Phi'(x)|$. We will abuse notation and write $\nu$ and $\Phi(\nu)$ instead of $\frac{d\nu}{ds}$ and $\frac{d\Phi(\nu)}{dx}$ respectively.

\begin{lemma}[Lemma 1.16 in \cite{kenigWeighted}]\label{lemma:phiNuAInfty}
With the above notation, 
\begin{equation}\label{eq:AinftyEquivalence}
\nu\in A_{\infty}(\Lambda)\quad
\text{ if and only if }\quad
\Phi(\nu)\in A_{\infty}(\R).    
\end{equation}
\end{lemma}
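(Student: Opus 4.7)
My plan is to use the quantitative Coifman--Fefferman / Muckenhoupt--Wheeden characterization of $A_{\infty}$ on both sides: a measure $w$ lies in $A_{\infty}$ (over $\Lambda$ with base measure $ds$, or over $\R$ with base measure $dx$) if and only if there exist $\epsilon,\delta>0$ such that for every interval (or arc) $I$ and every measurable $E\subset I$,
\[
\frac{|E|}{|I|}<\delta \quad\Longrightarrow\quad \frac{w(E)}{w(I)}<\epsilon,
\]
where $|\cdot|$ denotes the relevant base measure and $w(\cdot)$ the weighted measure. The advantage of this formulation is that it makes the equivalence a matter of pushing the ratios through $\Phi$.

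The first step is to observe that $\Phi$ sends intervals of $\R$ bijectively onto arcs of $\Lambda$ in the sense of Definition \ref{def:intervalLambda}. Indeed, by Theorem \ref{thm:thm1.1_Kenig} the map $\Phi$ is a homeomorphism of $\overline{\R_{+}^{2}}$ onto $\overline{\Omega}$, and since $\eta\colon \R\to\Lambda$ is also a homeomorphism, the composition $\eta^{-1}\circ\Phi$ is a monotone homeomorphism of $\R$, which sends intervals to intervals. For $I\subset\R$ an interval, $J=\Phi(I)$, $F\subset I$ measurable and $E=\Phi(F)$, the change of variables yields
\[
|J|_{ds}=\int_I|\Phi'|\,dx, \qquad |E|_{ds}=\int_F|\Phi'|\,dx,
\]
\[
\nu(J)=\int_I \Phi(\nu)\,dx, \qquad \nu(E)=\int_F \Phi(\nu)\,dx.
\]
Thus the $A_{\infty}$-ratios for $\nu$ on $\Lambda$ are precisely the $A_{\infty}$-ratios for $\Phi(\nu)$ on $\R$, but computed with $|\Phi'|\,dx$ in place of $dx$ as the ambient measure.

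The bridge between these two is the classical fact that $|\Phi'|$ itself belongs to $A_{\infty}(\R)$ whenever $\Omega$ is a graph Lipschitz domain; this is a consequence of the essential boundedness of $\arg \Phi'$ forced by the Lipschitz graph hypothesis on $\Lambda$. Granted this, $|\Phi'|\in A_{\infty}(\R)$ means exactly that $|F|/|I|$ being small is quantitatively equivalent to $\int_F|\Phi'|\,dx\,/\,\int_I|\Phi'|\,dx$ being small, and conversely. Substituting this equivalence into the implication above converts the $A_{\infty}(\Lambda)$ condition for $\nu$ into the $A_{\infty}(\R)$ condition for $\Phi(\nu)$, and the converse direction is symmetric.

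The main obstacle is the input $|\Phi'|\in A_{\infty}(\R)$, which is the deep conformal-mapping fact underlying the whole statement; I would simply quote it from \cite{kenigWeighted} rather than reprove it. The remaining work amounts to routine bookkeeping of the $A_{\infty}$ ratios through the change of variables, and a small care that the notion of \emph{interval} on $\Lambda$ in Definition \ref{def:intervalLambda} is compatible with $\Phi$, which was done in the first step.
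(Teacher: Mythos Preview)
The paper does not give its own proof of this lemma; it is simply quoted from \cite{kenigWeighted}. Your approach is correct and is essentially the standard argument one finds there: recast $A_\infty$ via the Coifman--Fefferman small-set characterization, use the change of variables $\xi=\Phi(x)$ to translate the ratios on $\Lambda$ into ratios on $\R$ with ambient measure $|\Phi'|\,dx$, and then invoke $|\Phi'|\in A_\infty(\R)$ to pass between the base measures $dx$ and $|\Phi'|\,dx$. Note that in the present paper the needed input is available as item~(2) of Theorem~\ref{thm:thm1.10_Kenig} (in fact $|\Phi'|\in A_2$), whose proof via Helson--Szeg\H{o} is independent of the lemma, so there is no circularity.

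One point worth making explicit in your write-up: to run \emph{both} directions you need not only that small $|F|/|I|$ forces small $\int_F|\Phi'|\big/\int_I|\Phi'|$ (which is the $A_\infty$ condition for $|\Phi'|$), but also the reverse implication. This holds because $|\Phi'|\in A_2$ gives a genuine two-sided comparison: from $w\in A_p$ one has $\left(|E|/|I|\right)^{p}\lesssim w(E)/w(I)$ as well as the usual $w(E)/w(I)\lesssim\left(|E|/|I|\right)^{\delta}$, so $dx$ and $|\Phi'|\,dx$ are mutually $A_\infty$. With that remark added, the bookkeeping you describe goes through cleanly.
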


\subsection{Hardy spaces over graph Lipschitz domains}

Hardy spaces are useful to manipulate the $L^p$-norms of the non-tangential maximal operators $\mathcal{M}_{\alpha}$ defined above.

\begin{definition}[Definition 2.7 in \cite{kenigWeighted}]
Let $\nu$ be a measure on $\Lambda$ and let $\alpha\in (0,\arctan{\frac{1}{L}})$. Define the Hardy space 
\[
H^p(\Omega, \nu)
\coloneqq 
\left\{
h: \Omega \rightarrow \mathbb{C}: h \text { is analytic in } \Omega \text { and }\left\|\mathcal{M}_\alpha(h)\right\|_{L^p(\Lambda, \nu)}<\infty
\right\}
\]
and set $\|h\|_{H^p(\Omega, \nu)}\coloneqq \left\|\mathcal{M}_\alpha(h)\right\|_{L^p(\Lambda, \nu)}$.
\end{definition}

The definition of $H^p(\Omega, \nu)$ is independent of $\alpha$. Different values of $\alpha$ give rise to equivalent norms. There is a characterization of this space in terms of $L^p$-norms over curves.

\begin{theorem}[Theorem 2.13 in \cite{kenigWeighted}]
    Let $\nu\in A_{\infty}(\Lambda)$ and $p\in (0,\infty)$. Then, $F\in H^p(\Omega,\nu)$ if and only if 
    \[
    \sup_{h>0}\int_{\Lambda}|F(\xi+ih)|^p\, d\nu(\xi)
    <
    \infty.
    \]
\end{theorem}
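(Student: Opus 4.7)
The plan is to split into the two implications: the $(\Leftarrow)$ direction is essentially immediate from the definition of the non-tangential cones, while the $(\Rightarrow)$ direction transfers the problem to the upper half-plane via $\Phi$ and invokes classical weighted Hardy-space theory.

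For $(\Leftarrow)$, I would observe that for any $\xi\in\Lambda$ and $h>0$ one has $\xi+ih\in\Gamma_\alpha(\xi)$, because $\operatorname{Re}(\xi+ih)-\operatorname{Re}(\xi)=0$ and $\operatorname{Im}(\xi+ih)-\operatorname{Im}(\xi)=h>0$, so the cone condition is trivially satisfied for every $\alpha\in(0,\arctan(1/L))$. Hence $|F(\xi+ih)|\leq\mathcal{M}_\alpha(F)(\xi)$ pointwise, and integrating against $d\nu$ and taking the supremum in $h$ yields the desired inequality $\sup_{h>0}\int_\Lambda|F(\xi+ih)|^p\,d\nu(\xi)\leq\|F\|_{H^p(\Omega,\nu)}^p$.

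For $(\Rightarrow)$, I would set $G:=F\circ\Phi$, analytic in $\R_+^2$. By Theorem \ref{thm:thm1.1_Kenig} together with standard conformal-distortion estimates for Lipschitz graph domains, there exist apertures $\alpha_1,\alpha_2$ such that $\Phi(\Gamma^+_{\alpha_1}(x))\subset\Gamma_\alpha(\Phi(x))\subset\Phi(\Gamma^+_{\alpha_2}(x))$ for every $x\in\R$, where $\Gamma^+_\beta$ denotes the standard upper half-plane cone; hence the non-tangential maximal functions of $G$ and $F$ are comparable under the change of variables $\xi=\Phi(x)$. By Lemma \ref{lemma:phiNuAInfty}, $\Phi(\nu)\in A_\infty(\R)$. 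The classical equivalence in $\R_+^2$, as developed in \cite{HMW}, asserts that for $w\in A_\infty(\R)$ and $G$ analytic in $\R_+^2$, finiteness of $\sup_{y>0}\int_\R|G(x+iy)|^p w(x)\,dx$ is equivalent to $\mathcal{M}^{\R_+^2}(G)\in L^p(w)$. To invoke it, I still need to check that the hypothesis on translates $\Lambda+ih$ of $\partial\Omega$ corresponds to the hypothesis on horizontal translates $\R+iy$ of $\partial\R_+^2$ after pullback. Since $\Phi$ does not send horizontal lines to translates of $\Lambda$, I would argue by a geometric sandwich: boundary distortion estimates for $|\Phi'|$ (controlled by the Lipschitz constant $L$ of $\gamma$) locate $\Phi^{-1}(\Lambda+ih)$ between two horizontal lines $\R+iy_\pm$ with $y_-\asymp y_+\asymp h$, and subharmonicity of $|G|^p$ in $\R_+^2$ transfers integrals between heights, yielding $\sup_y\int_\R|G(x+iy)|^p\,d\Phi(\nu)(x)\lesssim\sup_h\int_\Lambda|F(\xi+ih)|^p\,d\nu(\xi)$.

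The main obstacle is precisely this geometric sandwich step, which requires quantitative boundary-distortion estimates for the conformal map onto a Lipschitz graph domain; $\Phi$ does not respect either notion of ``translate of the boundary'', so one must pay to pass between them. A secondary technical point arises when $p$ is small: the Hardy--Littlewood maximal function is bounded on $L^p(w)$ only when $w\in A_p$, not merely $w\in A_\infty$, so for small $p$ I would choose $q<p$ with $w\in A_{p/q}$ and use subharmonicity of $|G|^q$ to dominate $\mathcal{M}^{\R_+^2}(G)^q$ by the Hardy--Littlewood maximal function of $|g|^q$, where $g$ is the non-tangential boundary value of $G$, which lies in $L^p(w)$ by the hypothesis and a weak-$\ast$ compactness extraction along $y_n\to 0$.
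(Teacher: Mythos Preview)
The paper does not prove this statement: it is quoted verbatim as Theorem~2.13 of \cite{kenigWeighted} in the preliminaries section, with no argument supplied. There is therefore no ``paper's own proof'' to compare your proposal against.

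A couple of remarks on the proposal itself. First, your direction labels are swapped throughout: the argument you place under ``$(\Leftarrow)$'' (namely $\xi+ih\in\Gamma_\alpha(\xi)$, hence $\sup_h\int_\Lambda|F(\xi+ih)|^p\,d\nu\le\|F\|_{H^p(\Omega,\nu)}^p$) actually establishes the implication $(\Rightarrow)$, and conversely. The mathematics of that easy direction is correct. Second, for the hard direction your overall strategy---pull back by $\Phi$, use Lemma~\ref{lemma:phiNuAInfty} to get $\Phi(\nu)\in A_\infty(\R)$, and invoke the half-plane equivalence---is the natural one and is indeed the route taken in \cite{kenigWeighted}. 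The obstacle you correctly single out (that $\Phi$ carries neither $\Lambda+ih$ to a horizontal line nor vice versa) is genuine, but your ``geometric sandwich'' is only a sketch: beyond locating $\Phi^{-1}(\Lambda+ih)$ between two horizontals at comparable heights, you must also relate the measure $d\Phi(\nu)$ along the curved pullback to $d\Phi(\nu)$ on the real line, or bypass this via Harnack/subharmonicity estimates for $|G|^q$ with $q$ small. Those are exactly the details carried out in Kenig's original argument, to which the present paper simply refers.
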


In particular, when $\Omega=\R_{+}^{2}$, for $w\in A_{\infty}(\R)$ and $p\in (0,\infty)$ we have 
\[
F\in H^p(\R_{+}^{2},w)
\iff 
\sup_{y>0}\|F(\cdot+iy)\|_{L^p(\R,w)}
<
\infty.
\]

\subsection{Poisson integrals and \texorpdfstring{$A_{\infty}$}{}-weights}

\begin{definition}
    We define the Poisson kernel at $(x,y)\in\R_{+}^{2}$ as
    \[
    P_{y}(x)
    \coloneqq 
    \frac{1}{\pi}\frac{y}{x^2+y^2}.
    \]
    Given a function $f$ with $\int_{\R}\frac{|f(x)|}{1+|x|^2}\, dx<\infty$, we define the Poisson integral of $f$ at $(x,y)\in\R_{+}^{2}$ as
    \[
    (P_y\ast f)(x)
    \coloneqq 
    \frac{1}{\pi}\int_{\R}\frac{y}{(x-t)^2+y^2}f(t)\, dt.
    \]
\end{definition}

One result that will turn useful for us is the following (see page 62 of \cite{garnett}).

\begin{theorem}\label{thm:log_garnettP62}
    Given $p\in (0,\infty]$, if $f\in H^p(\R_{+}^{2},dx)$ and $f(z)\neq 0$ for every $z\in\R_{+}^{2}$, then
    \[
    \log{|f(z)|}
    \leq 
    P_{y}\ast \log{|f|}(x), \quad  \forall z=x+iy\in\R_{+}^{2},
    \]
    where the function $f$ on the right-hand side denotes the non-tangential limits of $f(z)$.
\end{theorem}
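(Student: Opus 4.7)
The plan is to reduce to the space $H^2(\R_+^2,dx)$ and then apply the inner--outer factorization. Since $\R_+^2$ is simply connected and $f$ has no zeros, we can fix a branch of $\log f$ and, for $p<\infty$, define $g(z):=\exp\bigl(\tfrac{p}{2}\log f(z)\bigr)$. This $g$ is holomorphic and non-vanishing on $\R_+^2$ with $|g|^2=|f|^p$, so $g\in H^2(\R_+^2,dx)$, and the non-tangential limits satisfy $|g^*|=|f^*|^{p/2}$ a.e.\ on $\R$. If we prove $\log|g(z)|\leq (P_y\ast \log|g^*|)(x)$, dividing by $p/2$ recovers the claim for $f$. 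The case $p=\infty$ reduces to any finite $q$ after noting that $f/\|f\|_\infty\in H^q(\R_+^2,dx)$ for every $q<\infty$.

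Next, apply the Riesz inner--outer factorization in $H^2(\R_+^2)$ (classical, transferred from the disk via the Cayley transform): write $g=I\cdot O$, where $I$ is an inner function (so $|I(z)|\leq 1$ on $\R_+^2$ and $|I^*(x)|=1$ for a.e.\ $x\in\R$) and $O$ is outer (so $\log|O(z)|=(P_y\ast\log|O^*|)(x)$ for every $z=x+iy\in\R_+^2$). Because $g$ never vanishes on $\R_+^2$, no Blaschke-like factor appears and $I$ is purely singular. From $|I^*|=1$ a.e.\ we conclude $|g^*|=|O^*|$ a.e.\ on $\R$.

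Combining these facts,
\[
\log|g(z)|
=\log|I(z)|+\log|O(z)|
\leq \log|O(z)|
=(P_y\ast \log|O^*|)(x)
=(P_y\ast\log|g^*|)(x),
\]
which gives the required bound for $g$, and hence for $f$ by the reduction above.

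The main technical issue is to verify that the inner--outer factorization transfers cleanly from the disk to $\R_+^2$ (standard via the Cayley transform, but requiring care at infinity, which is why one usually works with $H^2$ rather than general $H^p$ directly), and to check that $(P_y\ast\log|f^*|)(x)$ is a convergent integral. The needed estimate $\int\frac{|\log|f^*(t)||}{1+t^2}\,dt<\infty$ follows from applying Jensen's inequality on half-disks together with $f\in H^p$, which also rules out $f^*=0$ on a set of positive measure and so legitimises the manipulations with $\log|f^*|$.
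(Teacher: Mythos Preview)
The paper does not supply its own proof of this statement; it is quoted as a known fact from page~62 of Garnett's \textit{Bounded Analytic Functions}. Your argument via the inner--outer factorization is exactly the standard route one finds there, so there is no alternative approach in the paper to compare against.

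For $0<p<\infty$ your reduction to $H^2$ by setting $g=f^{p/2}$ and the subsequent factorization argument are correct and standard. The only genuine slip is in the case $p=\infty$: the claim that $f/\|f\|_\infty\in H^q(\R_{+}^{2},dx)$ for finite $q$ is false, because $\R$ has infinite Lebesgue measure (the constant function $f\equiv 1$ is already a counterexample). This detour is in any case unnecessary: the inner--outer factorization holds directly in $H^\infty(\R_{+}^{2})$ (e.g.\ by transferring from $H^\infty(\mathbb{D})$ via the Cayley transform), so when $p=\infty$ you can apply the factorization to $f$ itself and run the same chain
\[
\log|f(z)|=\log|I(z)|+\log|O(z)|\leq \log|O(z)|=(P_y\ast\log|O^*|)(x)=(P_y\ast\log|f^*|)(x).
\]
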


One class of functions $f$ that satisfy $\int_{\R}\frac{|f(x)|}{1+|x|^2}\, dx<\infty$ is $BMO$ (see pages 141-142 of \cite{FeffermanSteinHp}). Our source of $BMO$ functions will be logarithms of $A_{\infty}$-weights. 

\begin{lemma}[Corollary IV.2.19 in \cite{GCRdF}]\label{lemma:AinftyBMO}
    If $w\in A_{\infty}$, then $\log{w}\in BMO$.
    As a consequence, given $w_1,w_2\in A_{\infty}$ and $\alpha_1,\alpha_2\in\R$, then $\log(w_1^{\alpha_1}w_2^{\alpha_2})\in BMO$. 
\end{lemma}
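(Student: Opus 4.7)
My plan is to reduce the lemma to the single implication $w\in A_\infty \Rightarrow \log w\in BMO$, since the ``as a consequence'' half is immediate: $BMO$ is a vector space over $\R$ and $\log(w_1^{\alpha_1}w_2^{\alpha_2}) = \alpha_1\log w_1 + \alpha_2\log w_2$, so any real linear combination of logs of $A_\infty$ weights inherits the $BMO$ property.

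For the main implication, I would start from the fact that $A_\infty = \bigcup_{p\in[1,\infty)} A_p$, so $w\in A_p$ for some finite $p$; I may assume $p>1$ since $A_1 \subset A_p$ for all $p>1$. Write $u := \log w$ and $u_Q := |Q|^{-1}\int_Q u\,dx$. Applying Jensen's inequality inside each factor of the $A_p$ condition
\[
\left(\frac{1}{|Q|}\int_Q w\,dx\right)\left(\frac{1}{|Q|}\int_Q w^{-1/(p-1)}\,dx\right)^{p-1} \le [w]_{A_p}
\]
gives at once the arithmetic--geometric mean comparability $e^{u_Q} \le w_Q \le [w]_{A_p}\, e^{u_Q}$, where $w_Q := w(Q)/|Q|$.

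For the upper tail $E_\lambda^+ := \{x\in Q : u(x)>u_Q+\lambda\}$, I would use the Coifman--Fefferman distributional inequality $w(E)/w(Q)\le C(|E|/|Q|)^{\delta}$ together with the pointwise bound $w\ge e^{u_Q+\lambda}$ on $E_\lambda^+$ and the upper AM--GM bound $w(Q)\le [w]_{A_p}|Q|e^{u_Q}$; rearranging yields $|E_\lambda^+|/|Q|\le C_1 e^{-c_1\lambda}$. For the lower tail $E_\lambda^- := \{u<u_Q-\lambda\}$, I would apply the same argument to the dual weight $v := w^{-1/(p-1)}\in A_{p'}\subset A_\infty$, whose logarithmic average on $Q$ is $-(p-1)^{-1}u_Q$; upper-tail control of $\log v$ translates into $|E_\lambda^-|/|Q|\le C_2 e^{-c_2\lambda}$. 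Integrating both tails in $\lambda$ then produces $|Q|^{-1}\int_Q |u-u_Q|\,dx \le C$ uniformly in $Q$, i.e.\ $u\in BMO$. The main technical step is precisely this reduction to the dual weight for the lower tail: the raw $A_\infty$ distributional inequality is one-sided by nature, so some form of symmetric control is unavoidable, and passing through the $A_p$/$A_{p'}$ duality is the cleanest way to obtain it.
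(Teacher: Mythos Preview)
The paper does not give a proof of this lemma: it is quoted as Corollary~IV.2.19 of \cite{GCRdF} and used as a black box, with the ``as a consequence'' part following (as you note) from the fact that $BMO$ is a real vector space. So there is nothing to compare your argument against inside the paper itself.

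That said, your argument is correct. The Jensen step giving $e^{u_Q}\le w_Q\le [w]_{A_p}e^{u_Q}$ is fine, and the upper-tail bound via the Coifman--Fefferman inequality $w(E)/w(Q)\le C(|E|/|Q|)^{\delta}$ combined with $w\ge e^{u_Q+\lambda}$ on $E_\lambda^+$ and $w(Q)\le [w]_{A_p}|Q|e^{u_Q}$ does yield $(|E_\lambda^+|/|Q|)^{1-\delta}\lesssim e^{-\lambda}$, hence exponential decay. Passing to the dual weight $w^{-1/(p-1)}\in A_{p'}$ to handle the lower tail is exactly the right symmetrization; note that a $\lambda$-lower tail for $u$ becomes a $\lambda/(p-1)$-upper tail for $\log v$, so the decay rate picks up the harmless factor $1/(p-1)$. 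Integrating the two exponential tails gives the uniform $L^1$ oscillation bound. In fact you obtain more than stated: the exponential tail decay is the John--Nirenberg inequality for $\log w$ directly, which is the form in which \cite{GCRdF} states the result. A slightly shorter route, once you have the two-sided comparison $e^{u_Q}\simeq w_Q$ (and the analogous one for $\sigma$), is to bound $\frac{1}{|Q|}\int_Q (u-u_Q)^+\le \log\!\bigl(\frac{1}{|Q|}\int_Q e^{(u-u_Q)^+}\bigr)\le \log\bigl(1+w_Q e^{-u_Q}\bigr)\le \log(1+[w]_{A_p})$ via Jensen, and symmetrically for the negative part using $\sigma$; but your tail argument is equally valid and closer in spirit to the reference.
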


Finally, we write $A\lesssim B$ when there exists a constant $C>0$ such that $A\leq C\cdot B$. If we want to make explicit the dependence of $C$ on some parameter $\alpha$, we may write $A\lesssim_{\alpha} B$. We write $A\simeq B$ to denote $A\lesssim B$ and $B\lesssim A$.

\section{Hunt-Muckenhoupt-Wheeden's argument}

The article \cite{HMW} was a breakthrough in the theory of weights. It characterized the weights $w$ for which the Hilbert transform is bounded on $L^p(w)$. In order to do so, several new properties and techniques regarding $A_p(\R)$-weights were introduced. These have been used and extended in \cite{kenigWeighted} and \cite{CNO} in order to deal with boundary value problems in graph Lipschitz domains.

In particular, we are interested in an argument handled in pages 248-249 of \cite{HMW}. Kenig refers to it repeatedly in \cite{kenigWeighted} to prove estimates of the form  $\frac{F(z)}{(i+z)^m}\in H^1(\R_{+}^{2},dx)$ for some $m\geq 0$ and some analytic function $F$. Since it is essential for the class $AE$, we have tried to express the argument in a closed statement as general as possible, so that we can refer to it when needed.

To begin with, we state some basic results. From now on, we denote $A_p(\R)$ by $A_p$, for $p\in [1,\infty]$.

\begin{lemma}\label{lemma:ApIntegrability}
    If $w\in A_p$ with $p\in (1,\infty)$, then 
    \[
    \frac{w(x)}{(1+|x|)^p}\in L^1(\R,dx).
    \]
\end{lemma}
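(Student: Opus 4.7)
The plan is a dyadic decomposition of $\R$ together with a sub-critical growth bound for $w$ on symmetric intervals. Write $I_0 = [-1,1]$ and $A_k = \{x\in\R : 2^{k-1} < |x| \leq 2^k\}$ for $k \geq 1$, so that $\R = I_0 \cup \bigcup_{k\geq 1} A_k$. On $I_0$ one has $(1+|x|)^p \geq 1$ and $w$ is locally integrable (as an element of $A_p$), so the piece over $I_0$ is trivially finite. On $A_k$, $1+|x| \simeq 2^k$, hence
\[
\int_{A_k} \frac{w(x)}{(1+|x|)^p}\, dx \simeq 2^{-kp}\, w(A_k) \leq 2^{-kp}\, w([-2^k, 2^k]).
\]
The proof therefore reduces to establishing a sub-critical growth estimate
\[
w([-2^k, 2^k]) \lesssim 2^{k(p-\epsilon)}, \qquad k \geq 1,
\]
for some $\epsilon > 0$, since then $\sum_k 2^{-k\epsilon} < \infty$.

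The key obstacle is that the direct $A_p$ inequality on $I_k = [-2^k, 2^k]$ only gives the borderline bound. Indeed, setting $\sigma \coloneqq w^{-1/(p-1)}$, the $A_p$ condition on $I_k$ reads
\[
w(I_k) \leq C\, \frac{|I_k|^p}{\sigma(I_k)^{p-1}},
\]
and the crude lower bound $\sigma(I_k) \geq \sigma(I_0) > 0$ produces $w(I_k) \lesssim 2^{kp}$, which is exactly \emph{not} summable after division by $2^{kp}$. To gain the missing $2^{-k\epsilon}$, I would exploit that $\sigma \in A_{p'} \subset A_\infty$ and invoke the reverse-doubling property of $A_\infty$ weights: there exist $c, \delta > 0$ such that $\sigma([-R, R]) \geq c\, R^{\delta}\, \sigma(I_0)$ for every $R \geq 1$. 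Plugging this in yields $w([-2^k, 2^k]) \lesssim 2^{k(p - \delta(p-1))}$, which is the desired sub-critical bound with $\epsilon = \delta(p-1) > 0$.

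An equivalent route, if one wants to avoid explicit reverse doubling, is to use the openness (self-improvement) property of the Muckenhoupt classes: $w \in A_p$ implies $w \in A_q$ for some $q \in (1, p)$, and then the naive $A_q$-bound immediately gives $w([-2^k, 2^k]) \lesssim 2^{kq}$ with $\epsilon = p - q$. Both approaches rest on the same classical fact (reverse Hölder for $A_\infty$ weights), which is available in standard references such as \cite{GCRdF}, so the argument is short once that tool is cited. The summation of the resulting geometric series is routine.
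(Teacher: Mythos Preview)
Your argument is correct. Both the reverse-doubling route for $\sigma=w^{-1/(p-1)}\in A_{p'}$ and the openness route $w\in A_q$ for some $q<p$ yield the sub-critical growth $w([-2^k,2^k])\lesssim 2^{k(p-\epsilon)}$, and the dyadic summation then gives finiteness. Note that your bounds actually deliver the quantitative estimate $\int_{\R}\frac{w(x)}{(1+|x|)^p}\,dx\lesssim w([-1,1])$, not merely finiteness.

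The paper takes a different, shorter path: it simply invokes Lemma~2.3(a) of \cite{CNO} (with $t=1$, $x_0=0$), which already packages the inequality $\int_{\R}\frac{w(x)}{(1+|x|)^p}\,dx\lesssim\int_{|x|<1}w(x)\,dx$ for $w\in A_p$. That cited lemma, in turn, is proved by essentially the same mechanism you use (dyadic annuli plus the self-improvement of $A_p$), so the underlying mathematics coincides; the difference is that the paper defers the work to an external reference while you unpack it. Your version has the advantage of being self-contained and of making transparent exactly where the reverse H\"older/openness property enters; the paper's version is a one-line citation. Either is appropriate here.
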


\begin{proof}
    Apply the first item of Lemma 2.3 (a) in \cite{CNO}
    with $t=1$, $x_0=0$, to obtain
    \[
    \int_{\R}\frac{w(x)}{(1+|x|)^p}\, dx
    \lesssim 
    \frac{1}{1^p}\int_{|x|<1}w(x)\, dx
    <
    \infty.\qedhere
    \]
\end{proof}

\begin{corollary}\label{corol:poissonIntegralA2Weight}
    The Poisson integral of an $A_2$ weight is well-defined in $\R_{+}^{2}$.
\end{corollary}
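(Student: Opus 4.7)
The plan is to reduce the statement directly to the preceding Lemma \ref{lemma:ApIntegrability} with $p=2$. By the definition of the Poisson integral given just above, well-definedness of $(P_y \ast w)(x)$ at every $(x,y) \in \R_+^2$ amounts to checking the single integrability condition
\[
\int_{\R}\frac{|w(t)|}{1+|t|^2}\, dt < \infty.
\]
Since $w$ is a weight, in particular $w \geq 0$, so $|w|=w$ and the absolute value plays no role.

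First I would observe the elementary two-sided comparison $(1+|t|)^2 \leq 2(1+|t|^2)$ for every $t\in \R$, which yields
\[
\frac{1}{1+|t|^2} \leq \frac{2}{(1+|t|)^2}.
\]
Multiplying by $w(t)$ and integrating, this dominates the Poisson-integrability condition by the quantity $\int_{\R} \frac{w(t)}{(1+|t|)^p}\, dt$ in the special case $p=2$.

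Next I would invoke Lemma \ref{lemma:ApIntegrability} with $p=2$: since $w \in A_2$, we have $\int_{\R}\frac{w(t)}{(1+|t|)^2}\, dt < \infty$. Chaining the two bounds gives the required finiteness, and therefore the integral defining $(P_y\ast w)(x)$ converges absolutely for every fixed $(x,y)\in \R_+^2$.

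There is no real obstacle here; the corollary is simply the $p=2$ case of the previous lemma, packaged to emphasize the specific consequence that will be used later when taking Poisson extensions of $A_2$-weights. The only minor point worth stating explicitly is the comparison between $1+|t|^2$ and $(1+|t|)^2$, which is why I would include that inequality in the proof rather than merely citing Lemma \ref{lemma:ApIntegrability}.
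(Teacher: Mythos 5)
Your proof is correct and is exactly the argument the paper intends: the corollary is the $p=2$ case of Lemma \ref{lemma:ApIntegrability}, combined with the elementary comparison $(1+|t|)^2\leq 2(1+|t|^2)$ to match the integrability condition $\int_{\R}\frac{w(t)}{1+|t|^2}\,dt<\infty$ required in the definition of the Poisson integral. Nothing is missing.
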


\begin{lemma}\label{lemma:lemmaAuxiliarHMW1}
    If $W\in A_2$, then
    \[
    (P_y\ast W)(x)
    \lesssim 
    \frac{1}{y}\int_{|x-t|<y}W(t)\, dt.
    \]
\end{lemma}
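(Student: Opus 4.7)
The plan is to bound the Poisson kernel pointwise by a tail-kernel whose exponent matches the $A_2$ index, and then invoke the very same estimate from \cite{CNO} that was used to deduce Lemma~\ref{lemma:ApIntegrability}.

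First, I would use the elementary inequality $(a+b)^2\le 2(a^2+b^2)$, applied with $a=y$ and $b=|x-t|$, to get
\[
\frac{y}{(x-t)^2+y^2}\le \frac{2y}{(y+|x-t|)^2},\qquad x,t\in\R,\ y>0.
\]
Integrating against $W$ yields
\[
(P_y\ast W)(x)\le \frac{2y}{\pi}\int_{\R}\frac{W(t)}{(y+|x-t|)^2}\, dt.
\]

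Next, I would apply the first item of Lemma 2.3(a) of \cite{CNO}---the exact statement already used to prove Lemma~\ref{lemma:ApIntegrability}---to $W\in A_2$, now with the parameter choices $p=2$, $x_0=x$, and with the role of $t$ played by $y$. This gives
\[
\int_{\R}\frac{W(t)}{(y+|x-t|)^2}\, dt
\lesssim
\frac{1}{y^2}\int_{|x-t|<y}W(t)\, dt.
\]
Combining the two displays produces the desired inequality with an absolute implicit constant depending only on $[W]_{A_2}$.

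I do not expect any real difficulty in carrying this out: the whole argument rests on a one-line majorisation of the Poisson kernel and on a lemma already cited in this section. The only conceptual point worth emphasising is that the exponent $2$ in the denominator of the tail-kernel must coincide with the $A_p$ index in order for the quoted lemma to apply, which is precisely why the hypothesis $W\in A_2$ (rather than $W\in A_p$ for some other $p$) is the right one here.
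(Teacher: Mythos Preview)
Your argument is correct and follows essentially the same route as the paper's own proof: both reduce the Poisson integral to the standard $A_2$ tail estimate $\int_{\R}\frac{W(t)}{(y+|x-t|)^2}\,dt\lesssim y^{-2}\int_{|x-t|<y}W$. The only cosmetic difference is that the paper splits the integral into $|x-t|<y$ and $|x-t|>y$ and then cites Lemma~1 of \cite{HMW} for the far part, whereas you first majorise the Poisson kernel by $2y/(y+|x-t|)^2$ and invoke Lemma~2.3(a) of \cite{CNO} directly; these two cited lemmas express the same $A_p$ tail decay, so the approaches coincide.
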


\begin{proof}
    We have 
    \[
    \int_{\R}\frac{y}{(x-t)^2+y^2}W(t)\, dt
    \lesssim 
    \frac{1}{y}\int_{|x-t|<y}W(t)\, dt
    +
    y\int_{|x-t|>y}\frac{W(t)}{|x-t|^2}\, dt  
    \lesssim 
    \frac{1}{y}\int_{|x-t|<y}W(t)\, dt,
    \]
    having applied in the last inequality Lemma 1 in \cite{HMW} with $p=2$ and $I=[x-y,x+y]$, so that $|I|=2y$ and $a_I=x$.
\end{proof}

The attempt to formulate the argument given in pages 248-249 of \cite{HMW} is the following. It will play a fundamental role later.

\begin{theorem}\label{thm:lemaHMWmejor}
    Let $W$ be a holomorphic function in $\R_{+}^{2}$ such that its modulus can be majorized by the Poisson integral of an $A_2$ weight; i.e., there exists $\bm{W}\in A_2$ such that
        \begin{equation}\label{eq:lemaHMWsizeCondition}
        |W(x+iy)|
        \lesssim 
        C(P_y\ast \bm{W})(x),\quad (x,y)\in\R_{+}^{2}. 
        \end{equation}
    Let $F$ be a bounded holomorphic function in $\R_{+}^{2}$ such that $|F(z)|=O(|z|^{-1})$ when $|z|\to\infty$.

    Then
    \[
    F^2\cdot W
    \in 
    H^1(\R_{+}^{2},dx).
    \]
\end{theorem}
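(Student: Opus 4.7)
The plan is to verify the Hardy space characterization
\[
\sup_{y>0}\int_{\R}|F(x+iy)|^{2}\,|W(x+iy)|\,dx<\infty,
\]
which gives $F^{2}W\in H^{1}(\R_{+}^{2},dx)$. The strategy combines the Poisson-style size control of $W$ by an $A_{2}$ weight with a quadratic spatial decay of $|F|^{2}$ that is forced by the hypotheses on $F$.

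First, I would promote the two hypotheses on $F$ into the pointwise estimate
\[
|F(x+iy)|^{2}\lesssim \frac{1}{1+x^{2}+y^{2}},\qquad (x,y)\in\R_{+}^{2}.
\]
This is immediate by splitting $\R_{+}^{2}$ into a compact part of $\{|z|\le R_{0}\}$, where boundedness applies, and its complement, where $|F(z)|=O(|z|^{-1})$ applies; the square is what produces the $(1+|z|^{2})^{-1}$ tail and will be decisive below. Second, using the size condition \eqref{eq:lemaHMWsizeCondition} together with Lemma \ref{lemma:lemmaAuxiliarHMW1} applied to $\bm{W}\in A_{2}$, I would write
\[
|W(x+iy)|\lesssim (P_{y}\ast\bm{W})(x)\lesssim \frac{1}{y}\int_{|x-t|<y}\bm{W}(t)\,dt,
\]
and then apply Fubini to get
\[
\int_{\R}|F(x+iy)|^{2}|W(x+iy)|\,dx\lesssim \int_{\R}\bm{W}(t)\left[\frac{1}{y}\int_{|x-t|<y}|F(x+iy)|^{2}\,dx\right]dt.
\]

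Third, I would estimate the bracket uniformly in $y$. For $|x-t|<y$ one has $t^{2}\le 2(x-t)^{2}+2x^{2}<2y^{2}+2x^{2}$, hence $1+t^{2}+y^{2}\le 3(1+x^{2}+y^{2})$. Combined with the pointwise bound from the first step, this yields
\[
\frac{1}{y}\int_{|x-t|<y}|F(x+iy)|^{2}\,dx\lesssim \frac{1}{1+t^{2}+y^{2}}\le \frac{1}{1+t^{2}}.
\]
Inserting this back and invoking Lemma \ref{lemma:ApIntegrability} with $p=2$ delivers
\[
\int_{\R}|F(x+iy)|^{2}|W(x+iy)|\,dx\lesssim \int_{\R}\frac{\bm{W}(t)}{1+t^{2}}\,dt<\infty,
\]
uniformly in $y>0$, which is the desired $H^{1}$ bound.

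The main technical point is the step that converts a bound on $|F(x+iy)|^{2}$ in the variable $x$ of integration (which follows from the size of $F$ alone) into decay in the outer variable $t$ (against which $\bm{W}$ must be summable). This is where \emph{both} pieces of the hypothesis on $F$ are needed: boundedness alone would fail to produce decay at infinity, while $O(|z|^{-1})$ alone would not control the compact region; and it is here that the square in $F^{2}$ is essential, since Lemma \ref{lemma:ApIntegrability} only secures integrability of $\bm{W}(t)/(1+|t|)^{2}$, not of $\bm{W}(t)/(1+|t|)$.
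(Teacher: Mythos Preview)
Your proof is correct and follows essentially the same route as the paper: bound $|W|$ by the local average of $\bm W$ via Lemma~\ref{lemma:lemmaAuxiliarHMW1}, combine the boundedness and decay of $F$ into $|F(z)|^{2}\lesssim (1+|z|^{2})^{-1}$, apply Fubini/Tonelli, use $|x-t|<y\Rightarrow t^{2}\le 2(x^{2}+y^{2})$ to transfer the decay to the outer variable, and finish with Lemma~\ref{lemma:ApIntegrability}. Your closing paragraph explaining why both hypotheses on $F$ and the square are needed is a nice addition not made explicit in the paper.
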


\begin{proof}
    The assumption \eqref{eq:lemaHMWsizeCondition} together with Lemma \ref{lemma:lemmaAuxiliarHMW1} give us
    \begin{equation}\label{eq:ineqProofLemmaHMW2}
    |W(x+iy)|
    \lesssim 
    \frac{1}{y}\int_{|x-t|<y}\bm{W}(t)\, dt.    
    \end{equation}
    Fix $y>0$. Using that $F$ is bounded with $F(z)=O(|z|^{-1})$, inequality \eqref{eq:ineqProofLemmaHMW2} and Tonelli's theorem, we obtain
    \[
    \int_{\R}|F(x+iy)|^2|W(x+iy)|\, dx
    \lesssim
    \frac{1}{y}\int_{t\in\R}\bm{W}(t)\left(\int_{x:|x-t|<y}\frac{1}{1+x^2+y^2}\, dx\right)\, dt.   
    \]
    If $|x-t|<y$, then
    \[
    t^2
    =
    (x+(t-x))^2
    \leq 
    2(x^2+(x-t)^2)
    \leq 
    2(x^2+y^2),
    \]
    so
    \[
    \int_{x:|x-t|<y}\frac{1}{1+x^2+y^2}\, dx
    \lesssim 
    \int_{x:|x-t|<y}\frac{1}{1+t^2}\, dx
    \simeq
    \frac{2y}{1+t^2}.
    \]
    Therefore, the above expression can be bounded by $\int_{\R}\frac{\bm{W}(t)}{1+t^2}\, dt$, which is finite by Lemma \ref{lemma:ApIntegrability}.  So $F^2\cdot W$ is uniformly integrable on horizontal lines, i.e., $F^2\cdot W\in H^1(\R_{+}^{2},dx)$.\qedhere    
\end{proof}

We will apply Theorem \ref{thm:lemaHMWmejor} several times in the following particular case, which employs the notion of Smirnov-type condition.

    \begin{definition}[Smirnov-type condition]\label{def:smirnov}
    Let $F\colon \R_{+}^{2}\to \C$ be a holomorphic function that does not vanish anywhere in $\R_{+}^{2}$. We say that $F$ is of \textit{Smirnov-type} in $\R_{+}^{2}$, or satisfies a \textit{Smirnov-type condition}, if 
    $\log{|F(z)|}$ has non-tangential limits $dx$-a.e., which we denote by $\log{|\Tilde{F}|}(x)$, and 
    \[
    \log{|F(z)|}
    =
    (P_y\ast \log{|\widetilde{F}|})(x),
    \quad\forall z=x+iy\in\R_{+}^{2}.
    \]
    \end{definition}

Notice that if $|F|$ has non-tangential limits $|\Tilde{F}|\in A_{\infty}$, then $\log{|F|}$ has non-tangential limits $\log{|\Tilde{F}|}\in BMO$, and therefore the Poisson integral on the right-hand side is well-defined.

The next result is the first one pointing towards a relation between Smirnov-type and $H^1$-integrability when the non-tangential limit is in $A_{\infty}$.

\begin{corollary}\label{corol:lemaHMWCasoParticular}
    Let $W$ be a holomorphic function in $\R_{+}^{2}$ that does not vanish. Assume that $|W|$ has non-tangential limits $\bm{W}(x)$ a.e. $x\in\R$ with $\bm{W}\in A_2$, and that $W$ satisfies a Smirnov-type condition.
    
    Let $F$ be a bounded holomorphic function in $\R_{+}^{2}$ such that $|F(z)|=O(|z|^{-1})$ when $z\to\infty$.
    
    Then
    \[
    F^2\cdot W
    \in 
    H^1(\R_{+}^{2},dx).
    \]
\end{corollary}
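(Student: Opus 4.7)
The plan is to reduce Corollary \ref{corol:lemaHMWCasoParticular} directly to Theorem \ref{thm:lemaHMWmejor}. What is missing in order to invoke that theorem is the pointwise majorization
\[
|W(x+iy)|\;\lesssim\;(P_y\ast \bm{W})(x),\quad (x,y)\in\R_{+}^{2},
\]
with the same $\bm{W}\in A_2$ that appears in the statement. Everything else in the hypothesis of Theorem \ref{thm:lemaHMWmejor} (the assumptions on $F$, and the $A_2$-membership of the majorant) is already part of the data.

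To obtain the pointwise bound I would exploit the two properties of $W$ simultaneously. The Smirnov-type condition says
\[
\log|W(x+iy)|\;=\;(P_y\ast \log|\widetilde{W}|)(x)\;=\;(P_y\ast \log \bm{W})(x),
\]
since by hypothesis the non-tangential limit $|\widetilde{W}|$ equals $\bm{W}$ a.e. Because $\bm{W}\in A_2\subset A_{\infty}$, Lemma \ref{lemma:AinftyBMO} yields $\log \bm{W}\in BMO$, so the Poisson integral on the right is well defined. Now apply Jensen's inequality with respect to the probability measure $P_y(x-t)\,dt$ and the convex function $\exp$:
\[
|W(x+iy)|
\;=\;
\exp\!\bigl((P_y\ast \log \bm{W})(x)\bigr)
\;\le\;
\bigl(P_y\ast \exp(\log \bm{W})\bigr)(x)
\;=\;
(P_y\ast \bm{W})(x).
\]
The right-hand side is well defined and finite a.e. by Corollary \ref{corol:poissonIntegralA2Weight}. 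This is precisely the size condition \eqref{eq:lemaHMWsizeCondition} required in Theorem \ref{thm:lemaHMWmejor}, with the implicit constant equal to $1$.

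With this majorization in hand, an application of Theorem \ref{thm:lemaHMWmejor} to $W$ (with majorant $\bm{W}\in A_2$) and to the given $F$ yields $F^2\cdot W\in H^1(\R_{+}^{2},dx)$, which is the conclusion. The step that requires the slightest care is the justification of the Jensen inequality above, since $\log \bm{W}$ lies only in $BMO$ rather than in $L^1$; however, one can always cut off $\log \bm{W}$ from below (replacing it by $\max(\log \bm{W},-N)$), apply Jensen's inequality to the truncated function where everything is integrable, and pass to the limit as $N\to\infty$ using monotone convergence on the right and continuity of the Poisson integral of a $BMO$ function on the left. Apart from this small technical point, the argument is a direct combination of the Smirnov-type property with Theorem \ref{thm:lemaHMWmejor}, illustrating why the Smirnov-type condition is the natural companion of the $H^1$-integrability one discussed in the introduction.
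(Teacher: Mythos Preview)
Your proof is correct and follows exactly the same route as the paper: use the Smirnov-type identity $\log|W(z)|=(P_y\ast\log\bm{W})(x)$, exponentiate, apply Jensen's inequality to obtain $|W(z)|\le (P_y\ast\bm{W})(x)$, and then invoke Theorem~\ref{thm:lemaHMWmejor}. The paper's version is terser and omits the truncation remark for Jensen, but the argument is the same.
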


\begin{proof}
    Just observe that the Smirnov-type property implies the bound \eqref{eq:lemaHMWsizeCondition}. Indeed, if $\log{|W(z)|}=(P_y\ast \log{|\bm{W}|})(x)$, then taking exponentials and applying Jensen's inequality we obtain
    \[
    |W(z)|
    =
    e^{(P_y\ast \log{|\bm{W}|})(x)}
    \leq 
    (P_y\ast e^{\log{|\bm{W}|}})(x)
    =
    (P_y\ast |\bm{W}|)(x),
    \]
    with $|\bm{W}|\in A_2$ by hypothesis. \qedhere
\end{proof}

\section{The class AE of analytic extensions}

In this section, we investigate the class $AE$. But first, for clarity purposes, we recall some known results for the derivative $\Phi'$ of the conformal map $\Phi$. These results and their proofs will serve us as a model for the more general theory.

\subsection{The basic example \texorpdfstring{$\Phi'$}{}}

The basic example that we want to mimic is $\Phi'(z)$, $z\in \R_{+}^{2}$, seen as an analytic extension of the measure $|\Phi'(x)|\, dx$ on $\R=\partial\R_{+}^{2}$.

We recall the following fact, and give the proof of the third item, which is not explicit in \cite{kenigWeighted} and will be meaningful for us. Since it makes use of the proof of the second item, we give it as well.

\begin{theorem}[Thm. 1.10 in \cite{kenigWeighted}]\label{thm:thm1.10_Kenig}
With the above definitions and notation:
\begin{enumerate}
    \item $\left|\arg \Phi^{\prime}(z)\right| \leq \arctan{L}<\frac{\pi}{2}$ for all $z\in\R_{+}^{2}$.
    \item The locally integrable function $|\Phi'(x)|$ on $\R$ is an $A_2$-weight: $|\Phi'| \in A_2$.
    \item For any $\varepsilon>0$, 
    \begin{equation}\label{eq:integrabilityConditionH1}
    \frac{\Phi'(z)}{(i+\varepsilon z)^2}\hspace{0.2cm}\text{ and }
    \hspace{0.2cm} 
    \frac{[\Phi'(z)]^{-1}}{(i+\varepsilon z)^2}\in H^1(\R_{+}^{2}, dx).    
    \end{equation}
\end{enumerate}
\end{theorem}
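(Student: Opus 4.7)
The plan is to first verify item (2) by using the bound on $\arg \Phi'$ supplied by item (1), and then to deduce item (3) from item (2) by applying the Smirnov-type specialization of the Hunt--Muckenhoupt--Wheeden argument (Corollary~\ref{corol:lemaHMWCasoParticular}). Item (1) is a geometric consequence of the Lipschitz hypothesis and can be taken as given.

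For item (2), I would start from the fact that $\Phi$ is univalent, so $\Phi'$ has no zeros in $\R_{+}^{2}$ and $\log \Phi'$ admits a single-valued analytic branch whose real part is $\log|\Phi'|$ and whose imaginary part is $\arg \Phi'$. By item (1), $|\arg \Phi'(z)| \leq \arctan L < \pi/2$ throughout $\R_{+}^{2}$, and the same bound persists $dx$-a.e.\ on $\R$ after passing to non-tangential limits. Since $-\arg \Phi'$ agrees, up to an additive constant, with the Hilbert transform of $\log|\Phi'|$, one can represent $\log|\Phi'|$ as the Hilbert transform of a bounded function of $L^{\infty}$-norm strictly less than $\pi/2$. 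The Helson--Szeg\H{o} characterization of $A_2$ then delivers $|\Phi'|\in A_2$.

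For item (3), I would set $F(z) := (i+\varepsilon z)^{-1}$. For $z=x+iy\in\R_{+}^{2}$ one has $|i+\varepsilon z|^{2}=\varepsilon^{2}x^{2}+(1+\varepsilon y)^{2}\geq 1$, so $|F|\leq 1$ on $\R_{+}^{2}$ and $|F(z)|=O(|z|^{-1})$ as $|z|\to\infty$. The idea is then to apply Corollary~\ref{corol:lemaHMWCasoParticular} twice, once with $W=\Phi'$ and once with $W=1/\Phi'$. In both cases $W$ is holomorphic and non-vanishing in $\R_{+}^{2}$; the modulus of its non-tangential limit equals $|\Phi'|$ or $|\Phi'|^{-1}$, both of which lie in $A_2$ by item (2) and the self-duality $w\in A_2 \iff w^{-1}\in A_2$; and $W$ satisfies the Smirnov-type condition because $\Phi'$ is an outer function (Lipschitz domains are Smirnov domains), and the reciprocal of an outer function is outer. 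Corollary~\ref{corol:lemaHMWCasoParticular} then gives $F^{2}\cdot W=W/(i+\varepsilon z)^{2}\in H^{1}(\R_{+}^{2},dx)$ for each choice of $W$, which is exactly \eqref{eq:integrabilityConditionH1}.

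The main obstacle in this plan is verifying the Smirnov-type condition for $\Phi'$, that is, the fact that the derivative of the Riemann map onto a Lipschitz domain is outer. This is classical and follows from the Smirnov property of Lipschitz (indeed chord--arc) domains, but it is an external input not formally established in the preceding sections and so would have to be cited rather than reproved. The $A_2$ conclusion for $|\Phi'|$ is also somewhat delicate, but once the bound on $\arg \Phi'$ is in hand it reduces cleanly to Helson--Szeg\H{o}; the rest of the argument is the straightforward size estimate for $F$ that triggers the corollary.
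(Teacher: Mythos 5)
Your proposal is correct and follows essentially the same route as the paper: item (2) via the boundedness of $\arg\Phi'$, the conjugate-function relation, and Helson--Szeg\H{o}; item (3) by feeding $W=\Phi'$ and $W=(\Phi')^{-1}$, together with $F(z)=(i+\varepsilon z)^{-1}$, into Corollary~\ref{corol:lemaHMWCasoParticular}. The one point where you diverge is the step you flag as the main obstacle, namely the Smirnov-type condition for $\Phi'$: you propose to import it from the classical fact that Lipschitz (chord--arc) domains are Smirnov domains, so that the derivative of the Riemann map is outer. That citation is legitimate, but it is unnecessary, because the argument you sketch for item (2) already yields the condition as a byproduct. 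Since $\arg\Phi'$ is bounded by $\arctan L<\frac{\pi}{2}$, it is the Poisson integral of its boundary values $g_2$, and matching harmonic conjugates gives $\log|\Phi'(z)|=P_y\ast(-Kg_2+C)(x)$ for some constant $C$; taking non-tangential limits shows $\log|\Phi'(x)|=-Kg_2(x)+C$, whence $\log|\Phi'(z)|=P_y\ast\log|\Phi'|(x)$, which is exactly Definition~\ref{def:smirnov} for $\Phi'$. The case $W=(\Phi')^{-1}$ then follows by negating this identity (equivalently, by Lemma~\ref{lemma:smirnovProp1} with exponent $-1$), and $|\Phi'|^{-1}\in A_2$ by the self-duality of $A_2$, as you note. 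This is precisely how the paper proceeds, so your plan closes without any external input beyond what your own item (2) computation establishes.
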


\begin{proof}

\textbf{Proof of the second item.} The function $\Phi'(z)$ is analytic in $\R_{+}^{2}$ and does not vanish. Besides, by the previous item, there exists $\varepsilon>0$ such that $|\arg{\Phi'(z)}|\leq\frac{\pi}{2}-\varepsilon$. Therefore, 
\[
\log{\Phi'(z)}
=
\log{|\Phi'(z)|}+i\arg{\Phi'(z)},\quad z\in \R_{+}^{2},
\]
is also an analytic function in $\R_{+}^{2}$. 
Taking non-tangential limits,
\[
\lim_{z\triangleright x}\log{\Phi'(z)}
=
\log{|\Phi'(x)|}
+
i\arg{\Phi'(x)}
\eqqcolon 
g_1(x)+ig_2(x),\quad dx\text{-a.e. in }\R,
\]
with $\|g_2\|_{\infty}\leq \frac{\pi}{2}-\varepsilon$. The fact that $|\arg{\Phi'(z)}|\leq\frac{\pi}{2}-\varepsilon$ also implies, by the case $p=\infty$ of Theorem II.2.5 
of \cite{steinWeiss}, that
\[
\arg{\Phi'(z)}=P_y\ast g_2(x),\quad z=x+iy\in\R_{+}^{2}.
\]
Therefore, $\log{\Phi'(z)}$ and $-P_y\ast Kg_2(x)+iP_y\ast g_2(x)$ are two holomorphic\footnote{Here, $K$ denotes the Hilbert transform of $L^{\infty}$-functions; see for example page 105 of \cite{garnett} or page 135 of \cite{kenigWeighted}.} functions with the same imaginary parts. Therefore, by the Cauchy-Riemann equations, the real parts have to be equal up to an additive constant: there exists $C>0$ such that
\[
\log{|\Phi'(z)|}
=
-P_y\ast Kg_2(x)+C, \quad z=x+iy\in\R_{+}^{2}.
\]
Notice that, since $\int_{\R}P_y(t)\, dt=1$ for all $y>0$, then $(P_y\ast C)(x)=C$ for any $(x,y)\in \R_{+}^{2}$. Hence,
\[
\log{\Phi'(z)}
=
P_y\ast (-Kg_2+C)(x)
+
i(P_y\ast g_2)(x),\quad z=x+iy\in\R_{+}^{2}.
\]
Taking exponentials on both sides, we obtain
\[
\Phi'(z)
=
e^{P_y\ast (-Kg_2+C)(x)}
e^{i (P_y\ast g_2)(x)},
\quad (x,y)\in \R_{+}^{2}.
\]
Its modulus is
\begin{equation}\label{eq:moduloPhiPrima}
|\Phi'(z)|
=
e^{P_y\ast (-Kg_2+C)(x)},\quad z=x+iy\in\R_{+}^{2}.    
\end{equation}
Taking non-tangential limits, we get
\begin{equation}\label{eq:boundaryValuesModuloPhiPrima}
|\Phi'(x)|
=
e^{-Kg_2(x)+C},\quad x\in\R,    
\end{equation}
with $\|g_2\|_{\infty}\leq \frac{\pi}{2}-\varepsilon$. Denoting $f_1\coloneqq C$, $f_2\coloneqq -g_2$, we obtain that 
\[
|\Phi'|=e^{f_1+Kf_2},
\quad f_1,f_2\in L^{\infty},
\|f_2\|_{\infty}<\frac{\pi}{2}.
\]
So, by Helson-Szegö (Theorem I.8.14 in \cite{GCRdF}), $|\Phi'|\in A_2$.

\vspace{0.2cm}

\textbf{Proof of the third item.} Notice that taking logarithms in \eqref{eq:moduloPhiPrima} and \eqref{eq:boundaryValuesModuloPhiPrima}, one obtains that 
\begin{equation}\label{eq:smirnovPhiPrima}
\log{|\Phi'(z)|}
=
P_y\ast \log{|\Phi'|}(x),
\end{equation}
i.e., $\Phi'$ satisfies the Smirnov type-condition in $\R_{+}^{2}$, so Corollary \ref{corol:lemaHMWCasoParticular} with $W=\Phi'(z)$ and $F(z)=\frac{1}{i+\varepsilon z}$ gives us the first inclusion, and with $W=\Phi'(z)^{-1}$ and $F(z)=\frac{1}{i+\varepsilon z}$ gives us the second one.\qedhere  
\end{proof}

\subsection{Relation between Smirnov-type and \texorpdfstring{$H^1$}{}-integrability}

The key to prove the third item of Theorem \ref{thm:thm1.10_Kenig} has been the Smirnov condition \eqref{eq:smirnovPhiPrima} and the fact that $|\Phi'|$ and $|\Phi'|^{-1}$ are $A_2$-weights. This leads us to think that the above argument can be generalized to $A_{\infty}$-measures, because if $w\in A_{\infty}$, there exists a negative power of $w$ in  $A_{\infty}$, and the situation is then analogous. In fact, we obtain not only that Smirnov-type implies $H^1$-integrability, but also that the converse is true. First, we state the following simple but very useful fact.

    \begin{lemma}\label{lemma:smirnovProp1}
    Let $G_1$ and $G_2$ be non-vanishing holomorphic functions in $\R_{+}^{2}$, of Smirnov-type, and such that $|G_1|$ and $|G_2|$ have non-tangential limits in $A_{\infty}$. Let $\alpha_1,\alpha_2\in\R$. Then $G_1^{\alpha_1}G_2^{\alpha_2}$ is of Smirnov-type.
    \end{lemma}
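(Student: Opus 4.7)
The plan is to reduce the Smirnov-type property of $G_1^{\alpha_1}G_2^{\alpha_2}$ to that of $G_1$ and $G_2$ via linearity, both of $\log|\cdot|$ and of the Poisson integral.

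First, I would define the object carefully. Since $\mathbb{R}_+^2$ is simply connected and each $G_i$ is non-vanishing, fix holomorphic branches of $\log G_1$ and $\log G_2$ in $\mathbb{R}_+^2$ and set
\[
G_1^{\alpha_1}G_2^{\alpha_2} := \exp\bigl(\alpha_1 \log G_1 + \alpha_2 \log G_2\bigr).
\]
This is holomorphic and non-vanishing in $\mathbb{R}_+^2$, with modulus $|G_1(z)|^{\alpha_1}|G_2(z)|^{\alpha_2}$, so that
\[
\log\bigl|G_1^{\alpha_1}G_2^{\alpha_2}(z)\bigr|
= \alpha_1 \log|G_1(z)| + \alpha_2 \log|G_2(z)|, \quad z \in \mathbb{R}_+^2.
\]

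Second, I would handle the non-tangential limits. Because $G_1$ and $G_2$ are Smirnov-type, each $\log|G_i(z)|$ has $dx$-a.e. non-tangential limit $\log|\widetilde{G}_i|$. Taking linear combinations yields a $dx$-a.e. non-tangential limit
\[
\log\bigl|\widetilde{G_1^{\alpha_1}G_2^{\alpha_2}}\bigr| = \alpha_1 \log|\widetilde{G}_1| + \alpha_2 \log|\widetilde{G}_2|.
\]
By Lemma \ref{lemma:AinftyBMO}, the hypothesis $|\widetilde{G}_i|\in A_\infty$ gives $\log|\widetilde{G}_i|\in BMO$, and hence the combination above also lies in $BMO$, so that its Poisson integral is well-defined.

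Third, the Smirnov identity for $G_1^{\alpha_1}G_2^{\alpha_2}$ follows by linearity of the Poisson integral applied to the identities $\log|G_i(z)| = (P_y*\log|\widetilde{G}_i|)(x)$:
\[
\log\bigl|G_1^{\alpha_1}G_2^{\alpha_2}(z)\bigr|
= \alpha_1 (P_y*\log|\widetilde{G}_1|)(x) + \alpha_2 (P_y*\log|\widetilde{G}_2|)(x)
= \bigl(P_y*\log\bigl|\widetilde{G_1^{\alpha_1}G_2^{\alpha_2}}\bigr|\bigr)(x).
\]

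I do not foresee any serious obstacle here: the statement is essentially a linearity observation for $\log|\cdot|$ and for the Poisson integral. The only point that requires a small amount of care is ensuring that all Poisson integrals involved make sense, which is precisely why the $A_\infty$ assumption on the boundary limits, funnelled through Lemma \ref{lemma:AinftyBMO}, is invoked to place everything inside $BMO$.
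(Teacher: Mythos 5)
Your proof is correct and follows exactly the linearity argument that the paper leaves implicit, as the lemma is stated there without proof as a ``simple but very useful fact''. Defining the powers via holomorphic branches of the logarithm, splitting $\log|G_1^{\alpha_1}G_2^{\alpha_2}|$ linearly, and using that $\log|\widetilde{G}_i|\in BMO$ (Lemma \ref{lemma:AinftyBMO}) to justify the absolutely convergent Poisson integrals and their linearity is precisely what is needed.
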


Our main result is the following.

\begin{theorem}\label{thm:generalizacionArgumentoH1}
    Let $F\colon \R_{+}^{2}\to \C$ be a non-vanishing holomorphic function such that $|F|$ has non-tangential limit $|\Tilde{F}|\in A_{\infty}$.
    Then, $F$ satisfies a Smirnov-type condition if and only if there exist $m,s\geq 0$ and $q>0$ such that 
    \begin{equation}\label{eq:H1integrabilityViaSmirnov}
    \frac{F(z)}{(i+z)^m}\in H^1(\R_{+}^{2},dx),
    \quad
    \frac{F(z)^{-q}}{(i+z)^s}\in H^1(\R_{+}^{2},dx).
    \end{equation}
\end{theorem}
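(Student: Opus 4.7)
My plan is to prove the two implications separately, since they rely on quite different tools. Both will use the fact that $|\widetilde F|\in A_\infty$ gives $\log|\widetilde F|\in BMO$ via Lemma~\ref{lemma:AinftyBMO}, which makes the Poisson integral $P_y\ast\log|\widetilde F|$ well-defined.

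For the forward direction I would start from the Smirnov identity $\log|F(z)| = (P_y\ast\log|\widetilde F|)(x)$ and appeal to the well-known logarithmic growth of the Poisson extension of a BMO function: there exist constants $A,B\geq 0$ such that $|(P_y\ast\log|\widetilde F|)(x)|\leq A+B\log(2+|x|+y)$ for every $(x,y)\in \R_+^2$. Exponentiating produces the polynomial bound $|F(z)|\lesssim (2+|z|)^B$. Since $|i+(x+iy)|\geq (1+|x|+y)/2$, for any $m > B+1$ one would have
\[
\int_{\R}\frac{|F(x+iy)|}{|i+(x+iy)|^m}\,dx
\lesssim
\int_{\R}(2+|x|+y)^{B-m}\,dx
\lesssim
(2+y)^{B-m+1},
\]
uniformly bounded in $y>0$, so $F/(i+z)^m\in H^1(\R_+^2,dx)$. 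For the second inclusion I would fix any $q>0$: the function $F^{-q}$ (defined via a branch of $\log F$, which exists because $F$ is non-vanishing on the simply connected $\R_+^2$) is again of Smirnov-type with $-q\log|\widetilde F|\in BMO$, so the same argument gives some $s\geq 0$ with $F^{-q}/(i+z)^s\in H^1$.

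For the reverse direction I would set $G_1 = F/(i+z)^m$ and $G_2 = F^{-q}/(i+z)^s$; both are non-vanishing and in $H^1(\R_+^2,dx)$, so Theorem~\ref{thm:log_garnettP62} gives $\log|G_j(z)|\leq (P_y\ast\log|\widetilde G_j|)(x)$ for $j=1,2$. A small preliminary is to verify that $(i+z)$ satisfies the Smirnov identity $\log|i+z| = (P_y\ast\log|i+\cdot|)(x)$: both sides are harmonic in $\R_+^2$ with the same boundary value and growth $O(\log|z|)$, so their difference is a harmonic function vanishing on $\R$ with at most logarithmic growth, hence identically zero. Plugging in $\log|\widetilde G_1|(x) = \log|\widetilde F|(x) - m\log|i+x|$ and $\log|\widetilde G_2|(x) = -q\log|\widetilde F|(x) - s\log|i+x|$, the $\log|i+z|$ contributions cancel on both sides, leaving the two inequalities $\log|F(z)|\leq (P_y\ast\log|\widetilde F|)(x)$ (from $G_1$) and $-q\log|F(z)|\leq -q(P_y\ast\log|\widetilde F|)(x)$, i.e.\ $\log|F(z)|\geq (P_y\ast\log|\widetilde F|)(x)$ (from $G_2$). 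Together they give the Smirnov identity.

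The main obstacle I anticipate is the BMO/Poisson logarithmic-growth estimate used in the forward direction; it is a standard consequence of the John-Nirenberg lemma and the control of $|\phi_B-\phi_{B'}|$ for concentric balls in BMO (see for instance \cite{garnett}), but it has to be stated explicitly since $\log|\widetilde F|$ is unbounded in general. Everything else uses only machinery already developed in the excerpt: the $A_\infty\Rightarrow BMO$ step, the subharmonic bound of Theorem~\ref{thm:log_garnettP62}, and elementary algebraic manipulations with $(i+z)$.
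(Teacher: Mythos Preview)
Your reverse direction is correct and is a legitimate alternative to the paper's argument. The paper removes the $(i+z)^m$ factor by replacing $z$ with $\varepsilon z$ and letting $\varepsilon\to 0^{+}$ under dominated convergence; you instead verify directly that $\log|i+z|=(P_y\ast\log|i+\cdot|)(x)$ and cancel. Both routes work.

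The forward direction, however, has a genuine gap. The growth estimate you invoke,
\[
\bigl|(P_y\ast\phi)(x)\bigr|\leq A+B\log(2+|x|+y)\qquad\text{for every }(x,y)\in\R_{+}^{2},
\]
for $\phi\in BMO$, is \emph{false} as $y\to 0^{+}$. Take $\phi(t)=\log|t|$; the substitution $t=ys$ gives $(P_y\ast\phi)(0)=\log y\to -\infty$. This is not artificial for the problem at hand: the weight $|\widetilde F|(x)=|x|^{\alpha}$ with $-1<\alpha<0$ lies in $A_1\subset A_{\infty}$, and the Smirnov identity then forces $|F(iy)|=y^{\alpha}\to\infty$ as $y\to 0^{+}$, so no polynomial pointwise bound on $|F|$ is possible. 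What \emph{is} standard is $|P_y\ast\phi(x)-\phi_{I(x,y)}|\lesssim\|\phi\|_{BMO}$ with $I(x,y)=[x-y,x+y]$, but the average $\phi_{I(x,y)}$ can itself be of order $|\log y|$, so no uniform-in-$y$ pointwise control follows.

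The paper never seeks a pointwise polynomial bound. From the Smirnov identity and Jensen's inequality one gets $|F(z)|\leq (P_y\ast|\widetilde F|)(x)$, and this is only exploitable once $|\widetilde F|$, or a positive power of it, lies in $A_2$: then Corollary~\ref{corol:lemaHMWCasoParticular} (built on Lemma~\ref{lemma:lemmaAuxiliarHMW1}) converts the Poisson bound into the required uniform $L^1$ estimate on horizontal lines. Concretely one picks $r$ with $|\widetilde F|\in A_r$, sets $a=1/(r-1)$ so that $|\widetilde F|^{a}\in A_2$, applies Corollary~\ref{corol:lemaHMWCasoParticular} to $F^{a}$ to land in $H^{a}$, and bootstraps to $H^1$ using that the boundary values lie in $L^1$. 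The second inclusion is obtained for the \emph{specific} exponent $q=1/(r-1)$, which guarantees $|\widetilde F|^{-q}\in A_{r'}\subset A_{\infty}$; so the $A_{\infty}$ hypothesis is used in an essential structural way, not merely to place $\log|\widetilde F|$ in $BMO$ as your outline suggests.
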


\begin{proof}

    Assume first that $F$ satisfies a Smirnov-type condition. We prove the first inclusion of \eqref{eq:H1integrabilityViaSmirnov}. The ideas and notation follow closely those of Lemma 2.12 in \cite{kenigWeighted}.
    
    By hypothesis, $|\Tilde{F}|\in A_{\infty}$, so there exists $r>1$ such that $|\Tilde{F}|\in A_r$. We distinguish two cases.

    \textit{Case $1$:} $r<2$. In this case $|\Tilde{F}|\in A_r\subset A_2$ and $F$ is of Smirnov-type. Hence, by Corollary \ref{corol:lemaHMWCasoParticular}, $\frac{F(z)}{(i+z)^2} \in H^1(\R_{+}^{2},dx)$.

    \textit{Case $2$:} $r>2$. The idea is to reduce to the previous case. Let $a\coloneqq \frac{1}{r-1}$. Recall that $w\in A_r$ if and only if $w^{-\frac{1}{r-1}}\in A_{r'}$. Therefore, since $|\Tilde{F}|\in A_r$, we have $|\Tilde{F}|^{-a}\in A_{r'}\subset A_2$, and hence 
    \[
    |\Tilde{F}|^{a}\in A_2.
    \]
    Let 
    \[
    H_a(z)\coloneqq F(z)^{a}.
    \] 
    By Lemma \ref{lemma:smirnovProp1}, this function satisfies a Smirnov-type condition. 
    Besides, $|H_a|$ converges non-tangentially to $|\Tilde{F}|^{a}\in A_2$. Therefore, we can apply Corollary \ref{corol:lemaHMWCasoParticular} and obtain 
    \[
    \frac{H_a(z)}{(i+z)^2}
    \in 
    H^1(\R_{+}^{2},dx).
    \]
    This is equivalent to
    \[
    \frac{F(z)}{(i+z)^{\frac{2}{a}}}\in H^{a}(\R_{+}^{2},dx).
    \]
    Its modulus has non-tangential limit $\frac{|\Tilde{F}|(x)}{|i+x|^{\frac{2}{a}}}$, which is in $L^1(\R)$. Indeed, since $|\Tilde{F}|\in A_r$, by Lemma \ref{lemma:ApIntegrability} we have $\frac{|\Tilde{F}|(x)}{|i+x|^r}\in L^1(\R)$. Therefore, $\frac{|\Tilde{F}|(x)}{|i+x|^{\frac{2}{a}}}\in L^1(\R)$, because $\frac{2}{a}=2(r-1)=r+(r-2)>r$. 
    
    Since $\frac{F(z)}{(i+z)^{\frac{2}{a}}}\in H^{a}(\R_{+}^2,dx)$ with non-tangential limit in $L^1(\R)$, then, by Corollary II.4.3 
    of \cite{garnett},
    \[
    \frac{F(z)}{(i+z)^{\frac{2}{a}}}\in H^1(\R_{+}^{2},dx),
    \]
    obtaining statement with $m=\frac{2}{a}$.
    
    To see the second inclusion in \eqref{eq:H1integrabilityViaSmirnov}, notice that $|\widetilde{F}|\in A_r$ implies $|\widetilde{F}|^{-\frac{1}{r-1}}\in A_{r'}$. Taking $q=-\frac{1}{r-1}$ and following the same argument as for $F$, but now for $F^q$, we obtain the desired conclusion.

    \vspace{0.3cm}
    
    Now, we prove the converse. First, notice that
    \[
    \frac{1}{|i+z|^m}\simeq_{\varepsilon}\frac{1}{|i+\varepsilon z|^m},\quad \forall \varepsilon>0.
    \]
    Hence,
    \[
    \frac{F(z)}{(i+\varepsilon z)^m}\in H^1(\R_{+}^{2},dx),\quad \forall \varepsilon>0.
    \]
    So, by Theorem \ref{thm:log_garnettP62}, 
    \[
    \log{\frac{|F(z)|}{|i+\varepsilon z|^m}}
    \leq 
    \left(P_y\ast \log{\frac{|\Tilde{F}(\cdot)|}{|i+\varepsilon (\cdot)|^m}}\right)(x),\quad \forall \varepsilon>0.
    \]
    The left-hand side tends to $\log{|F(z)|}$ when $\varepsilon\to 0^{+}$, by continuity in $\varepsilon$. The right-hand side tends to $P_{y}\ast \log{|\Tilde{F}|}(x)$ by the Dominated Convergence Theorem, which can be applied because the integrand in the convolution can be bounded by $P_{y}(x-t)\log{|\Tilde{F}(t)|}\in L^1$, since $\log{|\Tilde{F}(t)|}\in BMO$. 
    Therefore,
    \[
    \log{|F(z)|}
    \leq 
    P_{y}\ast \log{|\Tilde{F}|}(x).
    \]

    Reasoning in the same way with $\frac{F(z)^{-q}}{(i+z)^m}$, we obtain
    \[
    \log{|F(z)|^{-q}}
    \leq 
    P_{y}\ast \log{|\Tilde{F}|^{-q}}(x),
    \]
    which gives the other inequality $\log{|F(z)|}
    \geq
    P_{y}\ast \log{|\Tilde{F}|}(x)$.
\end{proof}

\subsection{The class AE of analytic extensions in \texorpdfstring{$\R_{+}^{2}$}{}}

The properties about $\Phi'$ proved in the previous section motivate the following definition.

\begin{definition}[Analytic extensions on $\R_{+}^{2}$]\label{def:definitionAEsemiplano}
    Given $\mu\in A_{\infty}(\R)$, 
    we say that a function $F$ is an analytic extension of $\mu$ to $\R_{+}^{2}$, and we write $H\in AE(\R_{+}^{2},\mu)$ or simply $H\in AE(\mu)$, if:
    \begin{itemize}
        \item $H$ is an analytic function in $\R_{+}^{2}$ that does not vanish.
        \item $H$ has non-tangential limits $\Tilde{H}$ $dx$-a.e. such that $|\Tilde{H}(x)|\simeq \frac{d\mu}{dx}(x)$ $dx$-a.e.
        \item $H$ satisfies a Smirnov-type condition on $\R_{+}^{2}$. 
    \end{itemize}
\end{definition}

\begin{remark}\label{remark:oneWeightUnnecessary}
    As mentioned in the Introduction, the original definition (Definition 2.10 in \cite{kenigWeighted}) included an $H^1$-integrability condition: it asked for the existence of $m\geq 0$ such that
    \begin{equation}\label{eq:H1conditionDefinition}
    \frac{H(z)}{(i+z)^m}\in H^1(\R_{+}^{2},dx).    
    \end{equation} 
    Afterwards, in page 232 of \cite{jerisonKenig}, the Smirnov-type condition was included. By Theorem \ref{thm:generalizacionArgumentoH1}, this condition makes \eqref{eq:H1conditionDefinition} unnecessary. This simplifies the theory, because the $H^1$-integrability was the difficult part to check in particular cases, as can be seen in \cite{CNO}.
\end{remark}

As a consequence of the results in the previous section, we have:

\begin{corollary}\label{corol:phiPrimeInAE}
    With the above notation, $\Phi'\in AE(\R_{+}^{2},|\Phi'(x)|dx)$.    
\end{corollary}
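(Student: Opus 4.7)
The plan is to verify the three bullet points of Definition \ref{def:definitionAEsemiplano} for $H = \Phi'$ and the measure $\mu = |\Phi'(x)|\,dx$, using the facts already established in Theorems \ref{thm:thm1.1_Kenig} and \ref{thm:thm1.10_Kenig}. Each bullet is essentially a direct quotation of a property that has been proved above, so the whole argument amounts to collecting the relevant references.

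First, $\Phi'$ is holomorphic on $\R_{+}^{2}$ because $\Phi$ is a conformal map onto $\Omega$; the same conformality (injectivity plus holomorphy) forces $\Phi'(z) \neq 0$ for every $z \in \R_{+}^{2}$. Next, Theorem \ref{thm:thm1.1_Kenig} gives the existence $dx$-a.e. of non-tangential limits of $\Phi'(z)$, and states that these limits coincide with the boundary derivative $\Phi'(x)$ which is nonzero $dx$-a.e.; in particular $|\tilde{\Phi'}(x)| = |\Phi'(x)| = \frac{d\mu}{dx}(x)$, so the size condition holds with equivalence constant $1$.

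It only remains to check the Smirnov-type condition. This is exactly the identity \eqref{eq:smirnovPhiPrima}, established in the proof of the second item of Theorem \ref{thm:thm1.10_Kenig}: one has
\[
\log{|\Phi'(z)|}
=
P_{y}\ast \log{|\Phi'|}(x),
\quad z = x+iy\in\R_{+}^{2},
\]
where on the right $\log|\Phi'|$ denotes the boundary values of $\log|\Phi'(z)|$, and $\log|\Phi'|\in BMO$ by Lemma \ref{lemma:AinftyBMO} combined with the second item of Theorem \ref{thm:thm1.10_Kenig}, so the Poisson integral is well defined.

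There is no real obstacle here; the work has been done in the proof of Theorem \ref{thm:thm1.10_Kenig}. The only subtlety worth mentioning is that, by Remark \ref{remark:oneWeightUnnecessary}, the old $H^1$-integrability requirement \eqref{eq:H1conditionDefinition} is no longer part of the definition, so we do not need to reprove the third item of Theorem \ref{thm:thm1.10_Kenig}; however, if desired, that item provides an independent verification of the now-redundant integrability, consistent with Theorem \ref{thm:generalizacionArgumentoH1}.
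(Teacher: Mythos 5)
Your proposal is correct and follows exactly the route the paper intends: the corollary is an immediate consequence of the non-vanishing and boundary behaviour of $\Phi'$ (Theorem \ref{thm:thm1.1_Kenig}) together with the Smirnov-type identity \eqref{eq:smirnovPhiPrima} established inside the proof of Theorem \ref{thm:thm1.10_Kenig}, with the $H^1$-condition now redundant by Theorem \ref{thm:generalizacionArgumentoH1}. The only nitpick is that \eqref{eq:smirnovPhiPrima} is derived in the proof of the third item (by taking logarithms in \eqref{eq:moduloPhiPrima} and \eqref{eq:boundaryValuesModuloPhiPrima}), not the second, but this is purely a matter of citation.
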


In fact, since the Smirnov-type property behaves well with respect to powers, we have more:

\begin{corollary}\label{lemma:potenciaPhiPrimaAE}
    $(\Phi')^{\alpha}\in AE(\R_{+}^{2},|\Phi'|^{\alpha})$ for every $\alpha\in \R$ for which $|\Phi'|^{\alpha}\in A_{\infty}(\R)$.
\end{corollary}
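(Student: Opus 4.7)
The plan is to verify the three bullets of Definition \ref{def:definitionAEsemiplano} for $(\Phi')^{\alpha}$, interpreted via a single-valued branch of the power. By the first item of Theorem \ref{thm:thm1.10_Kenig}, we have $|\arg \Phi'(z)| \leq \arctan L < \frac{\pi}{2}$ on $\R_{+}^{2}$, so the principal branch of $\log \Phi'(z)$ is a well-defined analytic function in $\R_{+}^{2}$, and we can define $(\Phi'(z))^{\alpha}:= \exp(\alpha \log \Phi'(z))$. This is holomorphic and nowhere-vanishing in $\R_{+}^{2}$, giving the first bullet.

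For the second bullet, Theorem \ref{thm:thm1.1_Kenig} ensures that $\Phi'$ has a non-tangential limit $dx$-a.e. on $\R$, with this limit nonzero $dx$-a.e. Consequently $(\Phi')^{\alpha}$ has non-tangential limits $dx$-a.e., and the modulus of that limit equals $|\Phi'(x)|^{\alpha}$, which is exactly the density of the weight $|\Phi'|^{\alpha}$ against which we are measuring. Thus the modulus of the non-tangential limit is comparable (in fact equal) to the prescribed weight.

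For the third bullet, I would invoke Lemma \ref{lemma:smirnovProp1}. By Corollary \ref{corol:phiPrimeInAE}, $\Phi'$ is a non-vanishing Smirnov-type holomorphic function with $|\Phi'|\in A_2\subset A_{\infty}$ by item (2) of Theorem \ref{thm:thm1.10_Kenig}. Applying Lemma \ref{lemma:smirnovProp1} with $G_1=G_2=\Phi'$ and exponents $\alpha_1=\alpha$, $\alpha_2=0$ yields that $(\Phi')^{\alpha}$ is of Smirnov-type. Combined with the standing hypothesis that $|\Phi'|^{\alpha}\in A_{\infty}(\R)$, all three bullets of Definition \ref{def:definitionAEsemiplano} are met, so $(\Phi')^{\alpha}\in AE(\R_{+}^{2},|\Phi'|^{\alpha})$.

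The only subtle point is the first step: defining $(\Phi')^{\alpha}$ as a single-valued holomorphic function when $\alpha$ is not an integer. The bound $|\arg\Phi'|<\frac{\pi}{2}$ from Theorem \ref{thm:thm1.10_Kenig} is exactly what makes this legitimate via the principal branch. Once that is in place, the remaining work reduces to citing Theorem \ref{thm:thm1.1_Kenig}, Corollary \ref{corol:phiPrimeInAE} and Lemma \ref{lemma:smirnovProp1}; none of these steps requires nontrivial new computation, illustrating the usefulness of having removed the $H^1$-integrability condition from the definition (Remark \ref{remark:oneWeightUnnecessary}), since proving \eqref{eq:H1conditionDefinition} directly for $(\Phi')^{\alpha}$ would otherwise be the difficult point.
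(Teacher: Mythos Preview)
Your proof is correct and follows essentially the same approach as the paper: define $(\Phi')^{\alpha}$ via the principal branch of $\log\Phi'$ (legitimate by the argument bound in Theorem \ref{thm:thm1.10_Kenig}), check the non-tangential limit has modulus $|\Phi'|^{\alpha}$, and deduce the Smirnov-type condition from Corollary \ref{corol:phiPrimeInAE} together with Lemma \ref{lemma:smirnovProp1}. The paper's proof is simply a more compressed version of exactly these steps.
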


\begin{proof}
    Recall that $(\Phi')^{\alpha}(z)\coloneqq e^{\alpha\log{\Phi'(z)}}$, where the logarithm can be defined using the principal branch, as observed in the proof of Theorem \ref{thm:thm1.10_Kenig}.     
    This function is holomorphic in $\R_{+}^{2}$, does not vanish, and has non-tangential limit  
    $e^{\alpha\log{\Phi'(x)}}$, with modulus $e^{\alpha\log{|\Phi'(x)|}}=|\Phi'(x)|^{\alpha}$. And by Corollary \ref{corol:phiPrimeInAE} and Lemma \ref{lemma:smirnovProp1}, $(\Phi')^{\alpha}$ satisfies Smirnov.
\end{proof}

\begin{proposition}[Lemma 2.12 of \cite{kenigWeighted}]\label{thm:thm2.12_Kenig}
        Let $w\in A_{\infty}$. Then\footnote{Formula \eqref{eq:elementInAE} is morally correct, but the convolution $Q_t\ast \log{w}$ is not well-defined, and therefore we must clarify what it means. Decompose $\log{w}\in BMO$ as $\log{w}=f_1+Kf_2$ with $f_1,f_2\in L^{\infty}$, and then define 
        \[
        H(z)
        =
        e^{P_{y}\ast (f_1+Kf_2)(x)+iP_{y}\ast(Kf_1-f_2)(x)},\quad z=x+iy\in\R_{+}^{2}.
        \]
        The term $P_{y}\ast(Kf_1-f_2)$ is the formalisation of the heuristic notation $Q_y\ast \log{w}$ or, equivalently, $P_y\ast K(\log{w})$. The exponent is then holomorphic because it is the sum of
        $P_y\ast (f_1+i Kf_1)$ and $(-i)P_y\ast (f_2+iKf_2)$.
        } 
        \begin{equation}\label{eq:elementInAE}
        H(z)
        \coloneqq 
        e^{(P_y+iQ_y)\ast \log{w}(x)},
        \quad z=x+iy\in\R_{+}^{2},    
        \end{equation}
        is a holomorphic function in $\R_{+}^{2}$ that does not vanish, has non-tangential limit with modulus $w$ and is of Smirnov-type in $\R_{+}^{2}$. That is, $H\in AE(\R_{+}^{2},w)$.
    \end{proposition}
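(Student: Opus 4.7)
The plan is to interpret the heuristic $(P_y+iQ_y)*\log w$ rigorously via the Fefferman--Stein characterization $BMO = L^\infty + K(L^\infty)$, and then read off each of the three requirements of Definition \ref{def:definitionAEsemiplano} from the construction. The only genuine obstacle is this formalization: since the conjugate Poisson kernel $Q_y(t) = t/[\pi(t^2+y^2)]$ decays only like $1/t$, the convolution $Q_y * \log w$ is not literally defined for a generic $BMO$ function. Once this interpretation is in place, the rest is essentially a matter of bookkeeping.

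For the formalization, I would follow the footnote. By Lemma \ref{lemma:AinftyBMO}, $\log w \in BMO$, so we may write $\log w = f_1 + K f_2$ with $f_1, f_2 \in L^\infty$ real-valued, and define
\[
H(z) \coloneqq \exp\Bigl(P_y * (f_1 + K f_2)(x) + i\, P_y * (K f_1 - f_2)(x)\Bigr),\quad z = x+iy\in\R_{+}^{2}.
\]
Regrouping, the exponent equals $[P_y * f_1 + i P_y * K f_1] - i\,[P_y * f_2 + i P_y * K f_2]$; each bracket is the Cauchy integral of an $L^\infty$ function (the Poisson extension of $K g$ is the harmonic conjugate of the Poisson extension of $g$), so the exponent is holomorphic in $\R_{+}^{2}$. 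Therefore $H$ is holomorphic and non-vanishing as the exponential of a holomorphic function.

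For the boundary behaviour, both $f_1 + K f_2 = \log w$ and $K f_1 - f_2$ belong to $BMO$, and the Poisson integral of a $BMO$ function has non-tangential limits $dx$-a.e.\ equal to the function (a standard Fatou-type result; see e.g.\ \cite{garnett}). Hence
\[
\log H(z) \;\longrightarrow\; \log w(x) + i\bigl(K f_1(x) - f_2(x)\bigr)
\]
non-tangentially for a.e.\ $x \in \R$, so the non-tangential limit $\tilde H$ exists with $|\tilde H(x)| = w(x)$ a.e. Finally, the Smirnov-type condition falls out of the construction: $\log|H(z)| = \operatorname{Re}(\log H(z)) = P_y * (f_1 + K f_2)(x) = P_y * \log w(x) = P_y * \log|\tilde H|(x)$, which is precisely Definition \ref{def:smirnov}. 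Combining the three items, $H \in AE(\R_{+}^{2}, w)$.
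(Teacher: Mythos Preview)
Your proof is correct and follows essentially the same approach as the paper. The paper does not supply a separate proof of this proposition beyond the footnote attached to the statement; that footnote already contains the Fefferman--Stein decomposition $\log w=f_1+Kf_2$, the definition of $H$, and the observation that the exponent splits as $P_y\ast(f_1+iKf_1)+(-i)P_y\ast(f_2+iKf_2)$, and you have simply fleshed out the remaining items (non-tangential boundary values and the Smirnov identity $\log|H(z)|=P_y\ast\log w(x)$) in the expected way.
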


\subsection{The class AE of analytic extensions in \texorpdfstring{$\Omega$}{}}

We can also consider analytic extensions in $\Omega$ of measures defined on $\Lambda$. We begin again with a simple case. 

Imagine we want to have an analytic extension of arc-length measure $ds$. Its density function is $\frac{ds}{ds}(\xi)\equiv 1$, so the obvious analytic extension of this measure is $G(z)\equiv 1$ in $\Omega$.

Also notice that $\Phi(ds)=|\Phi'(x)|dx$, and the analytic extension of $ds$, which is $1$, can be obtained from the analytic extension of $|\Phi'(x)|$, which is $\Phi'(z)$, by the relation
\[
\Phi'(z)
=
(1\circ\Phi)(z)\cdot \Phi'(z),\quad z\in\R_{+}^{2}.
\]
In view of this example and formula $\Phi(\nu)=(\nu\circ\Phi)|\Phi'|$, it makes sense to define analytic extensions in $\Omega$ in the following way.

\begin{definition}
Let $\Omega$ be a graph Lipschitz domain and let $\Phi\colon \R_{+}^{2}\to\Omega$ be the usual conformal mapping. Let $\nu$ be a measure on $\Lambda$ which is absolutely continuous with respect to arc-length measure. We say that $G\colon \Omega\to \C$ is an analytic extension of $\nu$ to $\Omega$, and write $G\in AE(\Omega,\nu)$, if
\begin{equation}\label{eq:equivalentConditionAE}
(G\circ\Phi)\cdot\Phi'\in AE(\R_{+}^{2},\Phi(\nu)).    
\end{equation}
\end{definition}

Notice that $AE(\Omega,\nu)\neq\varnothing$ if $\nu\in A_{\infty}(\Lambda)$. Indeed, if $\nu\in A_{\infty}(\Lambda)$ then $\Phi(\nu)\in A_{\infty}$. Therefore, by Theorem \ref{thm:thm2.12_Kenig}, $AE(\R_{+}^{2},\Phi(\nu))\neq\varnothing$. Take $H\in AE(\R_{+}^{2},\Phi(\nu))$, then $(H\circ\Phi^{-1})\cdot (\Phi^{-1})'\in AE(\Omega,\nu)$.

\subsection{Analytic extensions relating two measures}

It is possible to define a class of analytic extensions that relates two measures $\nu$ and $\mu$ in $\R_{+}^{2}$, by extending the Radon-Nikodym derivative
$\frac{d\nu}{d\mu}$.

\begin{definition}\label{def:AENuMu}
    Let $\nu,\mu\in A_{\infty}$. The space $AE(\R_{+}^{2},\nu; \R_{+}^{2},\mu)$, abbreviated $AE(\nu,\mu)$, is the class of functions $h\colon \R_{+}^{2}\to\C$ such that
    \begin{itemize}
        \item $h$ is analytic on $\R_{+}^{2}$ and different from $0$ at every point.
        \item $h$ has non-tangential limit $\widetilde{h}(x)$ $dx$-a.e. in $\R$, and this limit satisfies $|\widetilde{h}(x)|\simeq \frac{d\nu}{d\mu}(x)$ for $dx$-a.e.
        \item The function $h$ satisfies a Smirnov-type condition on $\R_{+}^{2}$.
    \end{itemize}  
\end{definition}

\begin{remark}\label{remark:twoWeightUnnecessary}
    Again, the original definition (Definition 2.11 of \cite{kenigWeighted}) included an $H^1$-integrability condition that is no longer necessary, because it is implied by the Smirnov-type condition.
\end{remark}

\begin{theorem}[Thm. 2.12 of \cite{kenigWeighted}]\label{thm:thm2.12_Kenig_Bis}
    Let $\nu,\mu\in A_{\infty}$. Then $AE(\nu,\mu)\neq \varnothing$.
\end{theorem}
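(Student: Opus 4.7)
The plan is to build the desired analytic extension as a quotient of two single-measure extensions, exploiting the fact that the simplified definition of $AE$ in Definition \ref{def:AENuMu} behaves well under products and powers thanks to Lemma \ref{lemma:smirnovProp1}.

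More concretely, since $\nu,\mu\in A_{\infty}$, Proposition \ref{thm:thm2.12_Kenig} (applied to each of them) produces two functions
\[
H_{\nu}\in AE(\R_{+}^{2},\nu),\qquad H_{\mu}\in AE(\R_{+}^{2},\mu),
\]
given by $H_{\nu}(z)=e^{(P_y+iQ_y)\ast \log\nu(x)}$ and analogously for $H_{\mu}$. Both are holomorphic and non-vanishing in $\R_{+}^{2}$, their non-tangential limits have moduli $\nu$ and $\mu$ respectively, and they satisfy Smirnov-type conditions. The natural candidate is then
\[
h(z)\coloneqq \frac{H_{\nu}(z)}{H_{\mu}(z)},\quad z\in\R_{+}^{2}.
\]

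I would then verify the three bullet points of Definition \ref{def:AENuMu} for $h$. Analyticity and non-vanishing are immediate because $H_{\mu}$ does not vanish. For the non-tangential limit, at $dx$-a.e.\ point $x\in\R$ both $H_{\nu}$ and $H_{\mu}$ have non-tangential limits $\widetilde{H}_{\nu}(x),\widetilde{H}_{\mu}(x)\neq 0$ with $|\widetilde{H}_{\nu}(x)|\simeq \nu(x)$ and $|\widetilde{H}_{\mu}(x)|\simeq \mu(x)$. Hence $\widetilde{h}(x)=\widetilde{H}_{\nu}(x)/\widetilde{H}_{\mu}(x)$ exists $dx$-a.e.\ and
\[
|\widetilde{h}(x)|
\simeq
\frac{\nu(x)}{\mu(x)}
=
\frac{d\nu}{d\mu}(x),\quad dx\text{-a.e.},
\]
which is the second condition in Definition \ref{def:AENuMu}.

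Finally, the Smirnov-type condition for $h$ follows from Lemma \ref{lemma:smirnovProp1} applied with $G_1=H_{\nu}$, $G_2=H_{\mu}$, $\alpha_1=1$, $\alpha_2=-1$: the hypotheses of that lemma are precisely that $G_1,G_2$ are non-vanishing, holomorphic, of Smirnov-type, with moduli whose non-tangential limits lie in $A_{\infty}$, all of which we have already checked. Thus $h=H_{\nu}H_{\mu}^{-1}$ is of Smirnov-type, and therefore $h\in AE(\nu,\mu)$. There is no real obstacle here: the whole point of the simplified definition (Remark \ref{remark:twoWeightUnnecessary}) is that once the cumbersome $H^1$-integrability condition is dropped, the class $AE(\nu,\mu)$ inherits the good algebraic behaviour of the Smirnov property, and the construction reduces to the one-weight case already handled by Proposition \ref{thm:thm2.12_Kenig}.
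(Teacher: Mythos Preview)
Your proof is correct and follows exactly the same approach as the paper: take $R\in AE(\R_{+}^{2},\nu)$ and $H\in AE(\R_{+}^{2},\mu)$ from Proposition~\ref{thm:thm2.12_Kenig} and set $h=R/H$. You have simply spelled out the verification of the three bullet points of Definition~\ref{def:AENuMu} (in particular the appeal to Lemma~\ref{lemma:smirnovProp1} for the Smirnov condition) that the paper leaves implicit in its one-line proof.
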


\begin{proof}
By Theorem \ref{thm:thm2.12_Kenig}, there exist $R\in AE(\R_{+}^{2},\nu)$ and $H\in AE(\R_{+}^{2},\mu)$, so $h\coloneqq \frac{R}{H}\in AE(\nu,\mu)$.\qedhere
\end{proof}

As seen in this proof, $AE(\nu,\mu)$ can be seen as quotients of functions in $AE(\nu)$ and $AE(\mu)$. Indeed, $h\in AE(\nu,\mu)$ if and only if $h\cdot \bm{k}\in AE(\nu)$ for any $\bm{k}\in AE(\mu)$.

With the same proof of Lemma \ref{lemma:potenciaPhiPrimaAE}, we obtain:

\begin{lemma}\label{lemma:PhiPrimaAlphaAE}
    If $\nu,\mu\in A_{\infty}(\R)$ and $\frac{d\nu}{d\mu}=|\Phi'|^{\alpha}$ for some $\alpha\in\R$, then $(\Phi')^{\alpha}\in AE(\nu,\mu)$.
\end{lemma}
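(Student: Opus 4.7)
The plan is to follow the proof of Lemma \ref{lemma:potenciaPhiPrimaAE} essentially verbatim, reinterpreting its conclusion in the two-measure framework of Definition \ref{def:AENuMu}. As in that proof, I would define
\[
(\Phi')^{\alpha}(z) := \exp\bigl(\alpha \log \Phi'(z)\bigr), \qquad z \in \R_{+}^{2},
\]
with $\log$ taken in the principal branch. This is unambiguous because Theorem \ref{thm:thm1.10_Kenig}(1) gives $|\arg \Phi'(z)| \leq \arctan L < \pi/2$ and $\Phi'$ is nonvanishing, so the function is analytic and nonvanishing in $\R_{+}^{2}$, settling the first bullet of Definition \ref{def:AENuMu}.

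For the boundary-value bullet, Theorem \ref{thm:thm1.1_Kenig} guarantees that $\Phi'$ has non-tangential limits $dx$-a.e.\ on $\R$, hence so does $(\Phi')^{\alpha}$ by continuity of the exponential; its modulus at the boundary is $|\Phi'(x)|^{\alpha}$ $dx$-a.e. By the hypothesis $\frac{d\nu}{d\mu} = |\Phi'|^{\alpha}$, this coincides pointwise with $\frac{d\nu}{d\mu}$, delivering the required equivalence with constant equal to $1$.

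The Smirnov-type condition is handled by Corollary \ref{corol:phiPrimeInAE} together with Lemma \ref{lemma:smirnovProp1}: the corollary places $\Phi'$ in $AE(\R_{+}^{2}, |\Phi'|\,dx)$, so $\Phi'$ is of Smirnov-type and its non-tangential modulus $|\Phi'|$ lies in $A_2 \subset A_\infty$ by Theorem \ref{thm:thm1.10_Kenig}(2); Lemma \ref{lemma:smirnovProp1}, applied with $G_1 = G_2 = \Phi'$ and exponents summing to $\alpha$, then shows that $(\Phi')^{\alpha}$ is of Smirnov-type.

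I expect no substantive obstacle: the only genuine novelty compared to Lemma \ref{lemma:potenciaPhiPrimaAE} is the bookkeeping that $|\Phi'|^{\alpha}$ is now being read as the Radon--Nikodym derivative $\frac{d\nu}{d\mu}$ rather than as a standalone $A_\infty$-weight. The hypotheses $\nu, \mu \in A_\infty$ enter only implicitly through Definition \ref{def:AENuMu}; the three bullets themselves are verified purely from the properties of $\Phi'$ already established, so no integrability or $A_p$-condition on the quotient $\frac{d\nu}{d\mu}$ is required along the way.
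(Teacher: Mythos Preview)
Your proposal is correct and follows exactly the approach the paper intends: the paper's proof is the single line ``With the same proof of Lemma~\ref{lemma:potenciaPhiPrimaAE}, we obtain,'' and you have faithfully spelled out that argument, reinterpreting the boundary modulus $|\Phi'|^{\alpha}$ as $\frac{d\nu}{d\mu}$ for the two-measure Definition~\ref{def:AENuMu}. The ingredients you invoke (Theorem~\ref{thm:thm1.10_Kenig}(1)--(2), Theorem~\ref{thm:thm1.1_Kenig}, Corollary~\ref{corol:phiPrimeInAE}, Lemma~\ref{lemma:smirnovProp1}) are precisely those used in the proof of Corollary~\ref{lemma:potenciaPhiPrimaAE}.
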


\subsection{Applications}

The aim of this section is to apply the previous theory to generalize Lemma 4.4 of \cite{CNO} in a simple manner. In order to do that, we need to recall a result from\footnote{We notice that, since our definitions of $AE(\Omega,\nu)$ and $AE(\nu,\mu)$ are at least as restrictive as the ones given in \cite{kenigWeighted}, we can apply all the theorems proved in there for these classes.} \cite{kenigWeighted}.

\begin{theorem}[Corollary 2.18 of \cite{kenigWeighted}]\label{thm:thm2.18_Kenig}
    Let $p\in (0,\infty)$ and $\nu,\mu\in A_{\infty}$. If $h\in AE(\nu, \mu)$, then 
    \[
    F \in H^p(\R_{+}^{2},\nu) 
    \iff 
    F\cdot h^{\frac{1}{p}} \in H^p(\R_{+}^{2},\mu),
    \]
    with equivalent norms.
\end{theorem}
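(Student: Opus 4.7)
The plan is to establish the two implications by symmetry and by reducing the Hardy-space comparison to a boundary $L^{p}$ comparison. First, since $h^{-1}$ is analytic, non-vanishing, and of Smirnov type on $\R_{+}^{2}$ by Lemma \ref{lemma:smirnovProp1}, and has boundary modulus $1/|\tilde h|\simeq d\mu/d\nu$, we have $h^{-1}\in AE(\mu,\nu)$. Hence it suffices to prove the forward implication $F\in H^{p}(\R_{+}^{2},\nu)\Rightarrow Fh^{1/p}\in H^{p}(\R_{+}^{2},\mu)$ together with the norm inequality $\|Fh^{1/p}\|_{H^{p}(\mu)}\lesssim\|F\|_{H^{p}(\nu)}$; applying this with $(\nu,\mu,h)\to(\mu,\nu,h^{-1})$ and $F\to Fh^{1/p}$ recovers the converse with equivalent norms.

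Next, $h^{1/p}:=\exp(\tfrac{1}{p}\log h)$ is well defined, analytic, non-vanishing, and of Smirnov type on $\R_{+}^{2}$ (again by Lemma \ref{lemma:smirnovProp1}), with non-tangential boundary modulus $|\tilde h|^{1/p}$, which is an $A_{\infty}$-weight since $|\tilde h|\simeq d\nu/d\mu$. Consequently $G:=Fh^{1/p}$ is analytic on $\R_{+}^{2}$, its non-tangential limit satisfies $|\tilde G|=|\tilde F|\,|\tilde h|^{1/p}$, and the boundary $L^{p}$-norms match:
\[
\|\tilde G\|_{L^{p}(\mu)}^{p}=\int_{\R}|\tilde F|^{p}|\tilde h|\,d\mu\;\simeq\;\int_{\R}|\tilde F|^{p}\,d\nu=\|\tilde F\|_{L^{p}(\nu)}^{p}.
\]

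By the $L^{p}$-characterization of Hardy spaces (Theorem 2.13 of \cite{kenigWeighted}), the remaining task is the uniform-in-$y$ bound $\sup_{y>0}\int|G(x+iy)|^{p}\,d\mu(x)\lesssim\|F\|_{H^{p}(\nu)}^{p}$. The Smirnov-type identity for $h$ together with Jensen's inequality, exactly as in the proof of Corollary \ref{corol:lemaHMWCasoParticular}, gives the pointwise bound $|h(x+iy)|\le(P_{y}\ast|\tilde h|)(x)$, which by Lemma \ref{lemma:lemmaAuxiliarHMW1} is controlled by a local average of $|\tilde h|$ over the interval of radius $\sim y$ around $x$. Combining this with $|\tilde h|\,d\mu\simeq d\nu$ and the reverse Hölder / $A_{r}$ self-improvement of $A_{\infty}$-weights, a Fubini-type argument converts $\int|F(x+iy)|^{p}|h(x+iy)|\,d\mu(x)$ into a bound by $\int|F(x+iy)|^{p}\,d\nu(x)\le\|F\|_{H^{p}(\nu)}^{p}$.

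The main obstacle is that $|h(x+iy)|$ is \emph{not} pointwise comparable to $|\tilde h(x)|$, so the transfer from interior values of $h$ to its boundary values must happen at the integrated level rather than pointwise. A secondary technicality is handling possible zeros of $F$: one extracts a Blaschke factor $F=B\cdot F_{0}$ with $|B|\le 1$ on $\R_{+}^{2}$ and $|\tilde B|=1$ a.e.\ on $\R$, so that $F_{0}h^{1/p}$ is non-vanishing and of Smirnov type; Theorem \ref{thm:generalizacionArgumentoH1} then provides the $H^{1}$-integrability needed to justify the Poisson-averaging step used above.
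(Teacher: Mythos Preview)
The paper does not give its own proof of this statement: it is stated as Corollary~2.18 of \cite{kenigWeighted} and used immediately, with no argument supplied. So there is nothing in the paper to compare your sketch against directly.

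On the merits of your sketch, the symmetry reduction via $h^{-1}\in AE(\mu,\nu)$ and the boundary $L^{p}$ identity $\|\tilde G\|_{L^{p}(\mu)}\simeq\|\tilde F\|_{L^{p}(\nu)}$ are correct and are indeed the right starting points. The gap is in your third paragraph. After the Jensen step $|h(z)|\le (P_{y}\ast|\tilde h|)(x)$ you invoke Lemma~\ref{lemma:lemmaAuxiliarHMW1}, which requires the weight to be in $A_{2}$. Here $|\tilde h|\simeq d\nu/d\mu$, and the quotient of two $A_{\infty}$-weights need not be in $A_{2}$, nor even in $A_{\infty}$; for instance $\nu=|x|^{1/2}\,dx$ and $\mu=|x|^{2}\,dx$ are both $A_{\infty}$ but $d\nu/d\mu=|x|^{-3/2}$ is not locally integrable, so the Poisson integral $P_{y}\ast|\tilde h|$ is not even finite. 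The ``Fubini-type argument'' you allude to therefore has no footing as written, and your final sentence---that the $H^{1}$-integrability from Theorem~\ref{thm:generalizacionArgumentoH1} ``justifies the Poisson-averaging step''---does not explain how that integrability repairs the divergent Poisson integral.

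The route that actually works (and is the one behind Kenig's corollary) replaces the hopeless pointwise/averaged control of $|h|$ by a \emph{bootstrap}: first land $Fh^{1/p}$ in $H^{q}(\R_{+}^{2},\mu)$ for some small $q>0$, and then upgrade to $H^{p}(\R_{+}^{2},\mu)$ using your boundary $L^{p}$ identity together with the weighted analogue of Garnett's Corollary~II.4.3. For the first step one exploits the Smirnov property not of $h$ itself but of a small power: since $\nu,\mu\in A_{\infty}$, one can choose $q$ so small that the boundary modulus of $h^{q/p}$ (equivalently, a suitable power of $\nu$ and of $\mu$) does lie in $A_{2}$; at that point Lemma~\ref{lemma:lemmaAuxiliarHMW1} becomes available and your Fubini idea goes through at exponent $q$. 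Your Blaschke-factor remark is relevant here, but only for this $H^{q}$ step, not for the original Poisson estimate on $|h|$.
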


This result makes rigorous the heuristic computation done in \eqref{eq:desiredChangeMeasureHpNorm}, expressing $\nu=\mu\cdot \frac{d\nu}{d\mu}$ and viewing $h$ as the analytic extension of $\frac{d\nu}{d\mu}$.

As a consequence of Lemma \ref{lemma:PhiPrimaAlphaAE} and Theorem \ref{thm:thm2.18_Kenig}, we get the following generalization of Lemma 4.4 of \cite{CNO}, which will be useful for the study of the $L^{p}$-solvability and the $H_{at}^1$-solvability of the Neumann problem in \cite{Ainfty_BYC}. The generalization is two-fold. First, their measure is the particular case $\nu=ds$. Second, we ask for $|\Phi'|^{-p}\Phi(\nu)$ in $A_{\infty}$ instead of $A_p$.

\begin{corollary}\label{corol:analogoLema4.4CNO}
   
    Let $\nu\in A_{\infty}(\Lambda)$ and $p\in [1,\infty)$. Suppose that $|\Phi'|^{-p}\Phi(\nu)\in A_\infty$. Then
    \[
    F\in H^p(\R_{+}^{2},|\Phi'|^{-p}\Phi(\nu))
    \iff 
    F\cdot \frac{1}{\Phi'}\in H^p(\R_{+}^{2},\Phi(\nu)),
    \]
    with equivalent norms.
\end{corollary}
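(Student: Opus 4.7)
The plan is to apply Theorem \ref{thm:thm2.18_Kenig} (Kenig's Corollary 2.18) with the choices $\mu_1 \coloneqq |\Phi'|^{-p}\Phi(\nu)$ in the role of $\nu$, $\mu_2 \coloneqq \Phi(\nu)$ in the role of $\mu$, and $h \coloneqq (\Phi')^{-p}$. Once the hypotheses of that theorem are verified, the conclusion will read
\[
F\in H^p(\R_{+}^{2},\mu_1)
\iff
F\cdot h^{1/p}\in H^p(\R_{+}^{2},\mu_2),
\]
which is exactly the equivalence we want, provided $h^{1/p}=\Phi'^{-1}$.

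The preliminary step is to confirm that both $\mu_1$ and $\mu_2$ lie in $A_{\infty}(\R)$. Since $\nu\in A_{\infty}(\Lambda)$, Lemma \ref{lemma:phiNuAInfty} gives $\Phi(\nu)\in A_{\infty}$, that is $\mu_2\in A_{\infty}$. The other membership $\mu_1\in A_{\infty}$ is part of the hypothesis.

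Next, I would identify the Radon--Nikodym derivative. Directly from the definition of $\mu_1$ and $\mu_2$ one has
\[
\frac{d\mu_1}{d\mu_2}(x)=|\Phi'(x)|^{-p},\quad x\in\R.
\]
Applying Lemma \ref{lemma:PhiPrimaAlphaAE} with $\alpha=-p$ then yields $(\Phi')^{-p}\in AE(\mu_1,\mu_2)$, so $h=(\Phi')^{-p}$ is a legitimate choice in Theorem \ref{thm:thm2.18_Kenig}.

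The only subtlety, and what I would regard as the main (mild) obstacle, is to justify that $h^{1/p}=1/\Phi'$ in the sense required by the statement of Theorem \ref{thm:thm2.18_Kenig}. Recall from the proof of Theorem \ref{thm:thm1.10_Kenig} that $\log\Phi'$ is single-valued and holomorphic in $\R_{+}^{2}$ via the principal branch, because $|\arg\Phi'|\le\arctan L<\pi/2$. Hence $(\Phi')^{-p}(z)\coloneqq e^{-p\log\Phi'(z)}$ is well-defined, and taking the canonical $p$-th root gives
\[
h^{1/p}(z)=e^{\frac{1}{p}\cdot(-p\log\Phi'(z))}=e^{-\log\Phi'(z)}=\frac{1}{\Phi'(z)}.
\]
Substituting this into the equivalence supplied by Theorem \ref{thm:thm2.18_Kenig} produces the claimed statement with equivalent norms, and completes the argument.
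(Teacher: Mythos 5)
Your proof is correct and follows essentially the same route as the paper: apply Theorem \ref{thm:thm2.18_Kenig} with $h=(\Phi')^{-p}$, the two measures $|\Phi'|^{-p}\Phi(\nu)$ and $\Phi(\nu)$, and verify $h\in AE$ via Lemma \ref{lemma:PhiPrimaAlphaAE}. Your extra care in checking $\Phi(\nu)\in A_\infty$ through Lemma \ref{lemma:phiNuAInfty} and in identifying $h^{1/p}=1/\Phi'$ via the principal branch of $\log\Phi'$ only makes explicit what the paper leaves implicit.
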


\begin{proof}
It follows from Theorem \ref{thm:thm2.18_Kenig} with $F\equiv F$, $h\equiv (\Phi')^{-p}$, $\bm{d\nu}\equiv |\Phi'|^{-p}\Phi(\nu)$ and $\bm{d\mu}\equiv \Phi(\nu)$. It can be applied because $\bm{\nu},\bm{\mu}\in A_{\infty}$ by hypothesis, and $h\in \operatorname{AE}(\bm{\nu},\bm{\mu})$ by Lemma \ref{lemma:PhiPrimaAlphaAE}. \qedhere
\end{proof}

In fact, the proof of the previous result would work for any weights $\nu,\mu\in A_{\infty}$ with $\frac{d\nu}{d\mu}\simeq |\Phi'|^{-p}$.

To end this section, we want to remark that $AE(\nu,\mu)$ is defined in greater generality in Definition 2.11 of \cite{kenigWeighted}, considering two measures defined on two different graph Lipschitz domains. However, we have restricted to $\R_{+}^{2}$ because, for applications, it is usually enough to change measures in $\R_{+}^{2}$, and then pass from $\R_{+}^{2}$ to $\Omega$ using the following result.

    \begin{theorem}[Theorem 2.8 of \cite{kenigWeighted}]\label{thm:thm2.8_kenig}
        Let $p\in (0,\infty)$ and $\nu \in A_{\infty}(\Lambda)$. Then $h \in H^p(\Omega, \nu)$ if and only if $h \circ \Phi \in H^p\left(\mathbb{R}_{+}^2, \Phi(\nu)\right)$, with equivalent norms.
    \end{theorem}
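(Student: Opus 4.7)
The plan is to transfer the non-tangential maximal function on $\Omega$ to one on $\R_{+}^{2}$ via the conformal map $\Phi$, and then use the defining relation $\Phi(\nu)(E)=\nu(\Phi(E))$ to convert the integral.

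First, I would observe the routine fact that $h\colon \Omega\to \C$ is analytic if and only if $h\circ \Phi\colon \R_{+}^{2}\to \C$ is analytic, because $\Phi$ is a conformal map (and does not vanish away from the boundary by Theorem \ref{thm:thm1.1_Kenig}). So the only real work is to compare the two $H^p$-norms.

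The heart of the matter is a geometric comparison of non-tangential approach regions under $\Phi$. Concretely, I would prove that for every aperture $\beta\in(0,\arctan(1/L))$ there exists an aperture $\alpha=\alpha(\beta,L)\in (0,\arctan(1/L))$ such that for $dx$-a.e.\ $x\in\R$,
\[
\Phi\bigl(\Gamma_{\beta}(x)\bigr)\subseteq \Gamma_{\alpha}\bigl(\Phi(x)\bigr),
\]
and, conversely, for every $\alpha$ there exists $\beta$ with $\Phi^{-1}(\Gamma_{\alpha}(\Phi(x)))\subseteq \Gamma_{\beta}(x)$. This is a standard consequence of the fact that $\Omega$ is a graph Lipschitz domain: using the Lipschitz graph structure, a cone $\Gamma_{\alpha}(\Phi(x))\subset \Omega$ based at a boundary point is comparable (up to changing the aperture) to the image under $\Phi$ of a non-tangential cone in $\R_{+}^{2}$ based at $x$. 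This relies on the Kellogg-type regularity of $\Phi$ on the boundary of a Lipschitz domain, which is implicit in the setup of Section 2 and used throughout \cite{kenigWeighted}. The main technical obstacle lies here; I would carry it out by fixing $x_0\in\R$, working in a small neighborhood of $\Phi(x_0)$ where the boundary $\Lambda$ is a Lipschitz graph, and using the harmonic measure estimates / boundary Harnack principle for the conformal map.

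Combining the two inclusions pointwise gives
\[
\mathcal{M}_{\beta}(h\circ \Phi)(x)\leq \mathcal{M}_{\alpha}(h)(\Phi(x))\leq \mathcal{M}_{\beta'}(h\circ \Phi)(x),\quad dx\text{-a.e. }x\in\R,
\]
for appropriate $\beta,\beta'$. Now raise to the $p$-th power and integrate against $d\Phi(\nu)$. By the definition $\Phi(\nu)(E)=\nu(\Phi(E))$ together with the fact that $\Phi$ is a homeomorphism of the closures (Theorem \ref{thm:thm1.1_Kenig}), the change of variables $\xi=\Phi(x)$ gives
\[
\int_{\R}\bigl(\mathcal{M}_{\alpha}(h)\circ\Phi\bigr)(x)^{p}\,d\Phi(\nu)(x)
=
\int_{\Lambda}\mathcal{M}_{\alpha}(h)(\xi)^{p}\,d\nu(\xi)
=
\|h\|_{H^{p}(\Omega,\nu)}^{p}.
\]
Combining with the pointwise comparison yields
\[
\|h\circ \Phi\|_{H^{p}(\R_{+}^{2},\Phi(\nu))}\simeq \|h\|_{H^{p}(\Omega,\nu)},
\]
once we use that the $H^{p}$-norm is independent (up to equivalence) of the aperture, which is already recorded after the definition of $H^p(\Omega,\nu)$. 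The equivalence of norms immediately gives the equivalence of membership, proving the theorem.
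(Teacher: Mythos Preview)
The paper does not supply a proof of this statement; it merely quotes Theorem~2.8 from \cite{kenigWeighted}. Your outline is essentially the approach of Kenig's original proof: the heart of the matter is indeed the geometric comparison of non-tangential approach regions under $\Phi$, after which the pointwise comparison of maximal functions and the change-of-variables identity for $\Phi(\nu)$ give the result exactly as you describe. One remark on the technical step you flag: in \cite{kenigWeighted} the cone comparison is obtained directly from the bound $|\arg\Phi'|\leq \arctan L$ (item~(1) of Theorem~\ref{thm:thm1.10_Kenig}) together with the $A_2$ property of $|\Phi'|$, rather than through Kellogg-type regularity or the boundary Harnack principle, so that route may be more direct than the one you sketch.
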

    
Indeed, an example of this pholosophy is Lemma 4.1 of \cite{CNO}, which can be deduced directly from Lemma 4.4 of \cite{CNO} using Theorem \ref{thm:thm2.8_kenig}. Since Corollary \ref{corol:analogoLema4.4CNO} generalizes Lemma 4.4 of \cite{CNO}, combining it with Theorem \ref{thm:thm2.8_kenig} we generalize Lemma 4.1 of \cite{CNO}.

\begin{corollary}\label{corol:analogoLema4.1CNO}
    Let $F$ be an analytic function in $\R_{+}^{2}$. Then
    \[
    (F\circ\Phi^{-1})\cdot (\Phi^{-1})'
    \in 
    H^p(\Omega,\nu)
    \iff 
    F\circ\Phi^{-1}\in H^p(\Omega,|(\Phi^{-1})'|^p\nu),
    \]
    with equivalent norms.
\end{corollary}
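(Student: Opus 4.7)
The plan is to transport both sides of the claimed equivalence from $\Omega$ to $\R_+^2$ via Theorem \ref{thm:thm2.8_kenig}, identify the resulting functions and measures on $\R_+^2$ using the chain rule and the push-forward formula $\frac{d\Phi(\mu)}{dx}(x) = \frac{d\mu}{ds}(\Phi(x))|\Phi'(x)|$, and then apply Corollary \ref{corol:analogoLema4.4CNO} to close the argument. Under the natural hypotheses $\nu \in A_{\infty}(\Lambda)$ and $|\Phi'|^{-p}\Phi(\nu) \in A_\infty$, the last step is exactly what Corollary \ref{corol:analogoLema4.4CNO} provides.

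More concretely, first I would apply Theorem \ref{thm:thm2.8_kenig} to the left-hand side to rewrite
\[
(F\circ \Phi^{-1})\cdot (\Phi^{-1})' \in H^p(\Omega,\nu)
\iff
\bigl[(F\circ \Phi^{-1})\cdot (\Phi^{-1})'\bigr]\circ \Phi \in H^p(\R_+^2,\Phi(\nu)).
\]
Then I would simplify the composition using the chain-rule identity $(\Phi^{-1})'(\Phi(z)) = 1/\Phi'(z)$, so that the right side becomes $F\cdot \frac{1}{\Phi'} \in H^p(\R_+^2,\Phi(\nu))$.

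Next, I would apply Theorem \ref{thm:thm2.8_kenig} to the right-hand side to obtain
\[
F\circ \Phi^{-1} \in H^p(\Omega,|(\Phi^{-1})'|^p\nu)
\iff
F \in H^p\bigl(\R_+^2, \Phi(|(\Phi^{-1})'|^p\nu)\bigr).
\]
A direct computation with the push-forward formula recalled in Section \ref{sec:section1.1_notation} shows that
\[
\frac{d\Phi(|(\Phi^{-1})'|^p\nu)}{dx}(x)
= |(\Phi^{-1})'(\Phi(x))|^p\,\frac{d\nu}{ds}(\Phi(x))\,|\Phi'(x)|
= |\Phi'(x)|^{-p}\,\frac{d\Phi(\nu)}{dx}(x),
\]
so the measure in question is exactly $|\Phi'|^{-p}\Phi(\nu)$.

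After these identifications, the claimed equivalence is reduced to
\[
F\cdot \frac{1}{\Phi'} \in H^p(\R_+^2,\Phi(\nu))
\iff
F \in H^p(\R_+^2,|\Phi'|^{-p}\Phi(\nu)),
\]
with equivalent norms, which is precisely Corollary \ref{corol:analogoLema4.4CNO}. The only non-routine point is making sure the push-forward and chain-rule computations are carried out cleanly; there is no serious obstacle beyond bookkeeping, since the deep work has already been done in Corollary \ref{corol:analogoLema4.4CNO}.
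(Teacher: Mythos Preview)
Your proposal is correct and follows essentially the same route as the paper: transport both sides to $\R_+^2$ via Theorem~\ref{thm:thm2.8_kenig}, simplify using $(\Phi^{-1})'\circ\Phi=1/\Phi'$ and $\Phi(|(\Phi^{-1})'|^p\nu)=|\Phi'|^{-p}\Phi(\nu)$, and then invoke Corollary~\ref{corol:analogoLema4.4CNO}. Your write-up is in fact slightly more detailed than the paper's, since you spell out the push-forward computation and make the implicit hypotheses $\nu\in A_\infty(\Lambda)$ and $|\Phi'|^{-p}\Phi(\nu)\in A_\infty$ explicit.
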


\begin{proof}
    Just notice that by Theorem \ref{thm:thm2.8_kenig},
    \[
    (F\circ\Phi^{-1})\cdot (\Phi^{-1})'
    \in 
    H^p(\Omega,\nu)
    \iff 
    F\cdot \frac{1}{\Phi'}
    \in 
    H^p(\R_{+}^{2},\Phi(\nu)),
    \]
    because $(\Phi^{-1})'\circ\Phi=\frac{1}{\Phi'\circ\Phi^{-1}}\circ\Phi=\frac{1}{\Phi'}$,  and
    \[
    F\circ\Phi^{-1}
    \in 
    H^p(\Omega,|(\Phi^{-1})'|^p\nu)
    \iff 
    F
    \in 
    H^p(\R_{+}^{2},|\Phi'|^{-p}\Phi(\nu)),
    \]
    because $\Phi(|(\Phi^{-1})'|^p\nu)=|\Phi'|^{-p}\Phi(\nu)$. Therefore, the results follow by Corollary \ref{corol:analogoLema4.4CNO}.
\end{proof}

\section*{Acknowledgements}

I want to thank my PhD supervisor, María Jesús Carro, for her careful reading and suggestions, that have greatly improved the original version and ideas of the manuscript. I am also grateful to Virginia Naibo for several comments about the final version.

\bibliographystyle{alpha}
\bibliography{references}

\end{document}